\def \N {\mathbb{N}}
\def \R {\mathbb{R}}
\def \tr {\mathrm{Tr}}
\def \esslimsup{\mathrm{ess limsup}}
\def \essliminf{\mathrm{ess liminf}}
\def \diam {\mathrm{diam}}
\newcommand{\defeq}{\mathrel{\mathop:}=}
\newtheorem{theorem}{Theorem}[section]
\newtheorem{lemma}[theorem]{Lemma}
\newtheorem{proposition}[theorem]{Proposition}
\newtheorem{corollary}[theorem]{Corollary}
\theoremstyle{definition}
\newtheorem{definition}[theorem]{Definition}
\newtheorem{Assumption}{A}
\theoremstyle{remark}
\newtheorem{remark}[theorem]{Remark}
\numberwithin{equation}{section}
\newcommand{\intav}[1]{\mathchoice {\mathop{\vrule width 6pt height 3 pt depth  -2.5pt
\kern -8pt \intop}\nolimits_{\kern -6pt#1}} {\mathop{\vrule width
5pt height 3  pt depth -2.6pt \kern -6pt \intop}\nolimits_{#1}}
{\mathop{\vrule width 5pt height 3 pt depth -2.6pt \kern -6pt
\intop}\nolimits_{#1}} {\mathop{\vrule width 5pt height 3 pt depth
-2.6pt \kern -6pt \intop}\nolimits_{#1}}}
\begin{document}

\title{Schauder and Calder\'{o}n-Zygmund type estimates for fully nonlinear parabolic equations under ``small ellipticity aperture'' and applications}

\author{Jo\~{a}o Vitor  da Silva and Makson S. Santos}

\maketitle

\begin{abstract}

In this manuscript we derive Schauder estimates for viscosity solutions to non-convex fully nonlinear second order parabolic equations
\[
\partial_t u - F(x, t,D^2u) = f (x, t) \quad \text{in} \quad \mathrm{Q}_1 = B_1 \times (-1, 0],
\]
provided that the source $f$ and the coefficients of $F$ are H\"{o}lder continuous functions and $F$ enjoys a small ellipticity aperture. Furthermore, for problems with merely bounded data, we prove that such solutions are $C^{1, \text{Log-Lip}}$ smooth in the parabolic metric. We also address Calder\'{o}n-Zygmund estimates for such class of non-convex operators. Finally, we connect our findings with recent estimates for fully nonlinear models in certain solutions classes.

\medskip

\noindent \textbf{Keywords:} Schauder and Calder\'{o}n-Zygmund estimates, fully nonlinear parabolic equations, small ellipticity aperture.

\vspace{0.2cm}

\noindent \textbf{AMS Subject Classification:} 35B65, 35D40, 35K10, 35K55.
\end{abstract}

\tableofcontents


\section{Introduction}

In this article we investigate regularity estimates to viscosity solutions of non-convex fully nonlinear equations of the form
\begin{equation}\label{eq_main}
	\partial_t u - F(x, t, D^2 u) = f(x,t) \quad \textrm{in} \quad Q_1,
\end{equation}
under appropriate assumptions on $f$, the coefficients,  and the ellipticity aperture of the operator $F$, defined as $\mathfrak{e} := \frac{\Lambda}{\lambda} -1$. We produce $C^{2+\alpha,\frac{2+\alpha}{2}}$, parabolic-$C^{1, Log-Lip}$ and $W^{2,1, p}$-estimates, each tailored to different scenarios related to the regularity of $f$.

It has long been established, through Krylov-Safonov's Harnack inequality (as documented in \cite{KrySaf80}, \cite{CKS00}, and \cite{Wang92-I}), that viscosity solutions to constant coefficient equations of the form
\begin{equation}\label{EqHom}
\partial_t u - F(D^2 u) = 0 \;\; \mbox{ in } \;\; Q_1,
\end{equation}
exhibit local differentiability. Moreover, there exists a universal constant $\alpha \in (0, 1)$ such that
\[
\|u\|_{C^{1+\alpha, \frac{1+\alpha}{2}}(Q_{1/2})} \leq \mathrm{C}(\verb"universal")\|u\|_{L^{\infty}(Q_1)}.
\]

In the pursuit of classical solutions (i.e., $C^{2,1}$ solutions) to fully nonlinear equations like \eqref{EqHom}, ground-breaking contributions emerged from the works of Evans \cite{Evans82} and Krylov \cite{Krylov83}. It states that under concavity or convexity assumption on $F$, solutions to \eqref{EqHom} are $C^{2 +\alpha, \frac{2 +\alpha}{2}}(Q_1)$, for some universal $\alpha \in (0, 1)$  and we have
\[
\|u\|_{C^{2+\alpha, \frac{2+\alpha}{2}}(Q_{1/2})} \leq \mathrm{C}(\verb"universal")\|u\|_{L^{\infty}(Q_1)}.
\]

It is worth mentioning that Schauder estimates for 2nd order fully nonlinear (convex or concave) operators with Dini continuous data as  \eqref{eq_main} have been considerably well studied in the last decades, see for instance \cite[Section 1.1]{Wang92-II} for the H\"{o}lder continuous setting and, \cite[Theorem 1]{DasdosP19} and \cite{TW13} for similar results in a more general scenario.

The question of whether any fully nonlinear parabolic operator could enjoy an a priori $C^{2, 1}$ regularity theory has intrigued the mathematical community for several decades. Notably, Silvestre's work in \cite{Sil22} presented a compelling example of a non-homogeneous solution to a uniformly parabolic equation that exhibited an isolated singularity and failed to meet the $C^{2, 1}$ criteria. Similarly, Caffarelli-Stefanelli's counterexample in \cite{CafSte08} shed light on the fact that $C^{2,1}$-regularity is generally not guaranteed. Consequently, classical solutions are primarily secured under the assumption of convexity or concavity on $F$.

These counterexamples by Caffarelli-Stefanelli and Silvestre partially close the case for $C^{2, 1}$ regularity. Nevertheless, on the other hand, they simultaneously open up an even broader room for investigation. In effect, in view of the impossibility of a general existence theory for classical solutions to fully nonlinear equations, it becomes a central theme of research to obtain additional conditions on $F$ and on $u$ in order to establish $C^{2, 1}$ estimates.

On this subject, here we paraphrase Cabr\'{e} and Caffarelli in \cite[page 2]{CabCaf2003} for our context:

\bigbreak

\begin{flushright}
\begin{minipage}{8cm}
``Which assumptions on $F$, in between convexity of $F$ and no assumptions, and perhaps depending on the dimension $d$, guarantee that solutions of \eqref{eq_main} are classical?''
\end{minipage}
\end{flushright}

\bigbreak

With this question in mind, we state the first main  result of the paper:

\begin{theorem}[{\bf $C_{\text{loc}}^{2 +\alpha, \frac{2 +\alpha}{2}}$  estimates}] \label{theo:main1} Let $u\in C(Q_1) $ be a bounded viscosity solution to \eqref{eq_main} such that A\ref{Assump1}-A\ref{A3} hold true. Given $\alpha \in (0, 1)$, there exists $ \varepsilon_0 > 0$, depending only upon $d$ and $\alpha$, such that if
\[
\frac{\Lambda}{\lambda} -1 \leq \varepsilon_0,
\]
then $u\in C_{\text{loc}}^{2 +\alpha, \frac{2 +\alpha}{2}}(Q_{{1}/{2}})$ and there exists a positive constant $\mathrm{C}=\mathrm{C}(d, \lambda, \Lambda, \alpha)$ such that
\begin{equation}\label{EstSchauder}
      \|u\|_{C^{2 +\alpha, \frac{2 +\alpha}{2}}(Q_{1/2})}  \leq \mathrm{C}(\|u\|_{L^\infty(Q_1)}+\|f\|_{C^{\alpha, \frac{\alpha}{2}}(Q_1)}).
\end{equation}
\end{theorem}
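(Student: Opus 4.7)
The strategy is a parabolic version of Caffarelli's polynomial approximation scheme, coupled with a compactness argument that exploits the smallness of the ellipticity aperture. The driving observation is that the Pucci extremal operators obey
\[
\bigl|\mathcal{M}^{\pm}_{\lambda,\Lambda}(M)-\lambda\,\tr(M)\bigr|\leq (\Lambda-\lambda)\|M\|=\lambda\,\mathfrak{e}\,\|M\|,
\]
so any admissible frozen operator $F(x_0,t_0,\cdot)$ is $\lambda\mathfrak{e}$-close to the trace. Hence, when $\mathfrak{e}\leq\varepsilon_0$ is small, solutions of the frozen homogeneous problem $\partial_t u-F(x_0,t_0,D^2u)=0$ should be approximable, in $L^\infty$, by caloric functions $h$ solving $\partial_t h-\lambda\Delta h=0$, which are $C^\infty$ and possess \emph{a priori} $C^{2+\alpha,(2+\alpha)/2}$ estimates with universal constants.

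After a parabolic rescaling normalizing $\|u\|_{L^\infty(Q_1)}\leq 1$ and $\|f\|_{C^{\alpha,\alpha/2}(Q_1)}\leq\delta$ (with $\delta$ small, universal), the first key tool is an approximation lemma: for every $\eta>0$ there exist $\varepsilon_0,\delta_0>0$ such that, whenever $\mathfrak{e}\leq\varepsilon_0$ and $\|f\|_{L^\infty(Q_1)}\leq\delta_0$, any normalized solution $u$ admits $h\in C^\infty(Q_{3/4})$, $\partial_t h-\lambda\Delta h=0$, with $\|u-h\|_{L^\infty(Q_{3/4})}\leq\eta$. I prove this by contradiction: along a putative sequence $(u_k,F_k,f_k)$, the Krylov--Safonov $C^\alpha$ estimate recalled in the introduction gives precompactness in $C(Q_{3/4})$; the Pucci sandwich together with $\mathfrak{e}_k\to 0$ forces $F_k(M)\to\lambda\,\tr(M)$ locally uniformly in $M$; stability of viscosity solutions then compels any cluster point to be caloric, contradicting the obstruction.

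The second step is the inductive construction of a sequence of parabolic paraboloids
\[
P_k(x,t)=a_k+b_k\cdot x+c_k\,t+\tfrac12\langle M_k x,x\rangle, \qquad c_k - F(x_0,t_0,M_k)=f(x_0,t_0),
\]
satisfying $\|u-P_k\|_{L^\infty(Q_{\rho^k})}\leq\rho^{k(2+\alpha)}$ for some universal $\rho\in(0,1)$, together with Cauchy-type estimates on the coefficients. Given $P_k$, one rescales
$v(y,s):=\rho^{-k(2+\alpha)}(u-P_k)(x_0+\rho^k y,t_0+\rho^{2k}s)$:
the resulting equation has the same ellipticity aperture $\mathfrak{e}$ and, thanks to the H\"older hypothesis on $F$ in $(x,t)$ and on $f$, its source and the $(x,t)$-oscillation of its operator are of order $\rho^{k\alpha}$. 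Applying the approximation lemma produces a caloric $h$ close to $v$; its second-order parabolic Taylor polynomial at the origin defines $P_{k+1}$, and the universal $C^{2+\alpha,(2+\alpha)/2}$-bound on $h$ yields the decay $\rho^{(k+1)(2+\alpha)}$ after choosing $\rho$ small and $\eta$ accordingly. Since $(x_0,t_0)\in Q_{1/2}$ is arbitrary and the coefficients of $P_k$ form Cauchy sequences, a Campanato-type characterization of parabolic H\"older spaces delivers $u\in C^{2+\alpha,(2+\alpha)/2}_{\loc}(Q_{1/2})$ together with the quantitative bound \eqref{EstSchauder}.

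The main obstacle is the bookkeeping in the iterative step, where two error terms appear upon freezing the coefficients at $(x_0,t_0)$: one coming from $F(x,t,\cdot)-F(x_0,t_0,\cdot)$, evaluated on the a priori only universally bounded Hessian $M_k$ of $P_k$, and one from $f(x,t)-f(x_0,t_0)$. In the rescaled coordinates both carry a factor $\rho^{k\alpha}$, which is the exact decay rate compatible with the induction, provided $\|M_k\|$ remains universally bounded along the iteration; this is secured in the induction together with the Cauchy estimates on the coefficients. Crucially, $\mathfrak{e}$ is preserved under parabolic rescaling, so the approximation lemma is reapplicable at every step with the same $\varepsilon_0$, which is fixed once and for all after $\rho$ and $\eta$ have been calibrated.
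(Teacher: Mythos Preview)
Your proposal is essentially the paper's own argument: a compactness/approximation lemma exploiting $\mathfrak{e}\to 0$ to pass to the heat equation, followed by an iterative polynomial approximation and a Campanato-type conclusion via Lemma~\ref{Lem02}. One point, however, needs to be made explicit for the induction to close. You state that the paraboloids must satisfy the nonlinear constraint $c_k-F(x_0,t_0,M_k)=f(x_0,t_0)$, yet you define $P_{k+1}$ as the second-order Taylor polynomial of the caloric approximant $h$, which only yields $c=\lambda\,\tr(M)$. After rescaling by $\rho^{-k\alpha}$, the discrepancy $F(x_0,t_0,M_k)-\lambda\,\tr(M_k)$ (of size $\lesssim \lambda\mathfrak{e}\|M_k\|$, uniformly bounded but not decaying) produces a constant term in the rescaled operator that blows up like $\rho^{-k\alpha}$ and destroys the smallness regime required by your approximation lemma at step $k+1$. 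The paper fixes this inside the proof of Lemma~\ref{lemma_app} by replacing $P$ with $P-a\,t$, where $a:=\partial_t P-F(0,0,D^2P)$, so that the corrected polynomial satisfies the nonlinear constraint exactly; the extra error $|a|\rho^2\le C\mathfrak{e}\rho^2$ is absorbed into $\rho^{2+\alpha}$ once $\varepsilon_0$ is chosen small relative to $\rho^\alpha$. With this correction in place, your argument coincides with the paper's.
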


The estimates addressed by Theorem \ref{theo:main1} must be understood as the parabolic counterpart of the ones established in \cite{WuNiu23} in the elliptic scenario.

In the lack of a universal $\alpha$-H\"{o}lder modulus of continuity for the source term, we may produce asymptotic estimates in borderline Log-Lipschitz spaces. This result is enclosed in previous works, \cite{DasdosP19} and \cite{DaST17}, in the framework of flat solutions or convex operators.

\begin{theorem}[{ \bf Parabolic $C^{1, \text{Log-Lip}}$ estimates}]\label{theo:LL} Let $u\in C(Q_1) $ be a bounded viscosity solution to \eqref{eq_main} such that A\ref{Assump1}, A\ref{A2} and A\ref{ALL} hold true. There exists $ \varepsilon_0 > 0$, depending only upon $d,\alpha$, such that if
\[
\frac{\Lambda}{\lambda} -1 \leq \varepsilon_0
\]
then $u\in C^{1, \mathrm{Log-Lip}}(Q_{1/2})$ and
\[
  \|u\|_{C^{1, Log-Lip}(Q_{1/2})} \leq \mathrm{C}(\|u\|_{L^\infty(Q_1)}+\|f\|_{L^{\infty}(Q_1)}),
\]
where $C=\mathrm{C}(d, \lambda, \Lambda)$ is a positive constant.
\end{theorem}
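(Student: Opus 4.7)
The plan is to combine Theorem~\ref{theo:main1} applied to the frozen homogeneous operator with a geometric iteration adapted to the borderline Log-Lipschitz regime. By a standard parabolic translation and rescaling, it suffices to prove the pointwise estimate at the space-time origin, and we may assume $\|u\|_{L^\infty(Q_1)} + \|f\|_{L^\infty(Q_1)} \leq \delta$ for a small universal $\delta>0$ to be chosen.

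The key ingredient is a \emph{one-step flatness lemma}. Fix once and for all some $\alpha\in(0,1)$; then there exist universal constants $\rho\in(0,1)$, $\delta>0$ and $C_0$ such that any normalized $u$ admits a second-order parabolic polynomial $P(x,t) = a+ct+b\cdot x+\tfrac12 x^\top M x$ with $|a|+|b|+|c|+|M|\leq C_0$ satisfying $\sup_{Q_\rho}|u-P|\leq \rho^{2+\alpha}$. This is proved by contradiction and compactness: failure produces a sequence $u_j$ converging (by Krylov--Safonov equicontinuity) to a viscosity solution $u_\infty$ of the frozen problem $\partial_t u_\infty - F(0,0,D^2 u_\infty)=0$ in $Q_{3/4}$. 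Theorem~\ref{theo:main1} applies to $u_\infty$ (the source is zero, so no H\"older hypothesis on $f$ is required) and yields $C^{2+\alpha,(2+\alpha)/2}$-regularity; the parabolic Taylor polynomial of $u_\infty$ at the origin furnishes the desired $P$ once $\rho$ is small.

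The iteration produces polynomials $P_k(x,t)=a_k+c_kt+b_k\cdot x + \tfrac12 x^\top M_k x$ satisfying
\[
\sup_{Q_{\rho^k}}|u-P_k|\leq \rho^{(2+\alpha)k},\quad |b_{k+1}-b_k|\leq C\rho^{(1+\alpha)k},\quad |M_{k+1}-M_k|\leq C,
\]
with analogous $\rho^{(2+\alpha)k}$-control on $a_{k+1}-a_k$ and $c_{k+1}-c_k$. The inductive step is performed on the rescaled function $v_k(x,t):=\rho^{-(2+\alpha)k}[u-P_k](\rho^k x,\rho^{2k}t)$, which solves in $Q_1$ an equation $\partial_t v_k - \tilde F_k(x,t,D^2 v_k)=\tilde f_k$ with
\[
\tilde F_k(x,t,X) := \rho^{-\alpha k}F(\rho^k x,\rho^{2k}t,\, M_k+\rho^{\alpha k}X), \qquad \|\tilde f_k\|_{L^\infty}\leq \rho^{\alpha k}\delta.
\]
Since the ellipticity aperture is invariant under affine redefinition of the Hessian slot, $\tilde F_k$ satisfies A\ref{Assump1} with the same small aperture, while A\ref{A2} and A\ref{ALL} transfer with universal constants, so the flatness lemma reapplies and yields $P_{k+1}$.

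Telescoping the bounds gives $|M_k|\leq Ck$. Truncating $P_k$ to its affine-in-$x$ plus linear-in-$t$ part $L_k$ thus yields $\sup_{Q_{\rho^k}}|u-L_k|\leq Ck\rho^{2k}$, and because the $b_k,c_k$ increments are summable, $L_k$ converges to an affine polynomial $L_\infty$ encoding $u(0,0)$, $\nabla u(0,0)$, $\partial_t u(0,0)$. Interpolating over $r\in(\rho^{k+1},\rho^k]$ produces the parabolic $C^{1,\text{Log-Lip}}$ modulus $\sup_{Q_r}|u-L_\infty|\leq C r^2\log(1/r)$, and undoing the normalization gives the asserted estimate. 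The main technical obstacle is the rescaling bookkeeping: the only decay available per iterate is the factor $\rho^{\alpha k}$, so one must verify that A\ref{ALL} -- playing here the role A\ref{A3} played in Theorem~\ref{theo:main1} -- passes to the rescaled operators $\tilde F_k$ uniformly in $k$. This is precisely the feature that the Log-Lip-type modulus built into A\ref{ALL} is designed to provide.
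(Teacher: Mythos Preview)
Your iteration has a genuine error in the rescaling bookkeeping, and it is precisely the point you flag at the end as the ``main technical obstacle''. With $v_k(x,t)=\rho^{-(2+\alpha)k}(u-P_k)(\rho^k x,\rho^{2k}t)$ the rescaled source is
\[
\tilde f_k(x,t)=\rho^{-\alpha k}f(\rho^k x,\rho^{2k}t),
\]
so $\|\tilde f_k\|_{L^\infty}\leq \rho^{-\alpha k}\|f\|_{L^\infty(Q_1)}$, which \emph{blows up} as $k\to\infty$. The bound $\|\tilde f_k\|_{L^\infty}\leq \rho^{\alpha k}\delta$ you wrote would require $\|f\|_{L^\infty(Q_{\rho^k})}\leq C\rho^{2\alpha k}$, i.e.\ a H\"older-type decay on $f$; assumption A\ref{ALL} is nothing more than $f\in L^\infty(Q_1)$, and there is no ``Log-Lip modulus built into'' it. Consequently the flatness lemma cannot be re-applied to $v_k$ for large $k$, and the whole scheme collapses. (Note also that if your display $\sup_{Q_{\rho^k}}|u-P_k|\leq \rho^{(2+\alpha)k}$ did hold, the Hessian increments would satisfy $|M_{k+1}-M_k|\leq C\rho^{\alpha k}$, not $\leq C$, and you would have proved $C^{2+\alpha}$ regularity---which is false for merely bounded $f$.)

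The fix, and what the paper does, is to iterate at the borderline rate $\rho^{2k}$ rather than $\rho^{(2+\alpha)k}$: set $v_k(x,t)=\rho^{-2k}(u-P_k)(\rho^k x,\rho^{2k}t)$, so that the rescaled source is simply $f(\rho^k x,\rho^{2k}t)$ with \emph{no} prefactor, hence uniformly bounded by $\|f\|_{L^\infty}$. The approximation lemma then reapplies verbatim and yields $\sup_{Q_{\rho^k}}|u-P_k|\leq \rho^{2k}$ together with $|M_{k+1}-M_k|+|c_{k+1}-c_k|\leq C$. In particular $|M_k|,|c_k|\leq Ck$ (they need not converge), so only the first-order part $L_k(x)=a_k+b_k\cdot x$ stabilizes; the limiting affine function carries no $t$-term, and one obtains $\sup_{Q_r}|u-u(0,0)-Du(0,0)\cdot x|\leq Cr^2\log r^{-1}$, which is exactly the Log-Lipschitz estimate.
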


Additionally, we will provide $W_{\text{loc}}^{2,1,  p}$ estimates for operators under small ellipticity aperture. The fully nonlinear parabolic theory in Sobolev spaces (or Calder\'{o}n-Zygmund estimates) is quite recent and less is known when the PDE involves discontinuous ingredients and solutions are investigated from the Sobolev's point of view. In such a context, we highlight the Crandall's \textit{et al} results for convex operators in \cite[Theorem 2.8]{CKS00} (cf. \cite{Wang92-I} for related results). More precisely, viscosity solutions to
\[
\partial_t u - F(x, t, D^2 u) = f(x,t) \;\;\mbox{ in } \;\;Q_1,
\]
with $f \in L^p(Q_1)$, for $p>d+1$, satisfy the following regularity estimates
\[
 \|u\|_{W^{2,1,p}(Q_{1/2})} \leq \mathrm{C}(\verb"universal")\left(\|u\|_{L^\infty(Q_1)}+\|f\|_{L^p(Q_1)}\right).
\]
We also refer the reader to the works of \cite{DongKryl13}, \cite{DongKryl19} and \cite{ZZZ21} on Hessian type estimates for fully nonlinear parabolic PDEs with measurable coefficients, weighted and mixed-norm Sobolev spaces and/or under a sort of relaxed convexity assumptions (including oblique boundary conditions).

Therefore, we can ask a similar question as before: Is it possible to establish $W^{2,1, p}$ estimates, without imposing convexity or concavity on the nonlinearity $F$? In this scenario, the authors in \cite{CastPim17} proved Hessian estimates by using asymptotic assumptions on the operator $F$, with the notion of recession function.

Our next main theorem states that Calder\'{o}n-Zygmund estimates for non-convex fully nonlinear equations also hold under the assumption of ``small ellipticity aperture".

\begin{theorem}[{\bf $W_{{loc}}^{2, 1, p}$ parabolic estimates}] \label{theo:main3} Let $u\in C(Q_1) $ be a bounded viscosity solution to \eqref{eq_main} such that A\ref{Assump1}, A\ref{A2} and A\ref{A4} are in force. There exists $ \varepsilon_0 > 0$, such that if
\[
\frac{\Lambda}{\lambda} -1 \leq \varepsilon_0
\]
then $u \in W_{loc}^{2, 1, p}(Q_{ {1}/{2}})$ and there exists a positive constant $\mathrm{C}=\mathrm{C}(d, \lambda, \Lambda, p)$ such that
\[
    \|u\|_{W^{2,1,p}(Q_{1/2})} \leq \mathrm{C}(\|u\|_{L^\infty(Q_1)}+\|f\|_{L^p(Q_1)}).
\]
\end{theorem}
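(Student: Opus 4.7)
The plan is to adapt the parabolic Caffarelli--Escauriaza strategy for Calder\'{o}n--Zygmund estimates (as in \cite{CKS00}), with the convexity hypothesis on $F$ replaced by the small ellipticity aperture condition, which is precisely what will deliver $C^{2,1}$ regularity for the frozen homogeneous problem through Theorem \ref{theo:main1}. After the usual reduction by scaling and normalization to $\|u\|_{L^\infty(Q_1)}\le 1$ and $\|f\|_{L^p(Q_1)}\le \delta$ (with $\delta$ small, to be chosen), the backbone of the argument will be an approximation lemma: for each $\eta>0$, if $\delta$ and the coefficient oscillation of $F$ on a small cylinder around a fixed point $(x_0,t_0)\in Q_{1/2}$ are sufficiently small (the latter being controlled by A\ref{A2}), there exists a viscosity solution $w$ of the frozen, homogeneous equation $\partial_t w - F(x_0,t_0,D^2 w)=0$ such that $\|u-w\|_{L^\infty(Q_{3/4})}\le \eta$. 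This will be produced by a standard compactness/contradiction argument based on stability of (suitably defined $L^p$-)viscosity solutions together with Krylov--Safonov interior H\"{o}lder estimates.

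The decisive ingredient is that this approximating $w$ enjoys universal interior $C^{2+\alpha,\frac{2+\alpha}{2}}$ estimates --- in particular, pointwise control of $\|D^2 w\|_{L^\infty}$ and $\|\partial_t w\|_{L^\infty}$ in terms of $\|w\|_{L^\infty}$ --- which is exactly the content of Theorem \ref{theo:main1} applied to the constant-coefficient operator $F_0(\cdot):=F(x_0,t_0,\cdot)$ under the smallness assumption $\Lambda/\lambda-1\le \varepsilon_0$. It is in this step that small ellipticity aperture takes the place of concavity/convexity; note that we use Theorem \ref{theo:main1} only with vanishing right-hand side and frozen coefficients, so both A\ref{A2} and A\ref{A3} are trivial at this level.

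With the $C^{2,1}$-closeness in hand, the plan is to run the parabolic analogue of the Caffarelli--Crandall--Kocan--\'{S}wi\k{e}ch paraboloid touching scheme. One defines a Hessian-type maximal function $G(u)(x,t)$ as the smallest opening of a parabolic paraboloid that touches $u$ from above and from below at $(x,t)$ inside $Q_1$. Combining the approximation lemma with a dyadic iteration and a parabolic Calder\'{o}n--Zygmund covering (see \cite[Sect.~2]{CKS00}), one obtains constants $N>1$ and $\sigma\in(0,1)$, with $\sigma$ arbitrarily small upon further decreasing $\delta$, such that
\[
\bigl|\{G(u)>N^{k}\}\cap Q_{1/2}\bigr|\le C\sigma^{k}, \qquad k\in \mathbb{N}.
\]
Choosing $\sigma$ with $N^{p}\sigma<1$, the layer-cake identity then yields $G(u)\in L^{p}(Q_{1/2})$ with a quantitative bound, which in turn delivers $D^{2}u\in L^{p}$ via the standard characterization of $W^{2,p}$ in terms of touching paraboloids. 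Finally, the PDE itself, together with the uniform ellipticity of $F$, transfers $L^{p}$-control from $D^{2}u$ and $f$ to $\partial_{t}u$, completing the desired estimate.

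The main obstacle I anticipate is the quantitative calibration inside the approximation lemma: one must simultaneously choose how small the coefficient oscillation, the source norm $\|f\|_{L^p}$, and the aperture $\varepsilon_{0}$ have to be, so that the universal $C^{2,1}$-constant produced by Theorem \ref{theo:main1} closes the iteration with a fixed contraction factor independent of the scale. Once this compactness step is properly quantified, the decay--covering argument and the passage from $G(u)\in L^p$ to $u\in W^{2,1,p}_{\mathrm{loc}}$ follow the blueprint of \cite{CKS00} and amount to bookkeeping.
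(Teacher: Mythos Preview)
Your plan follows the same Caffarelli--type scheme as the paper: approximation by a smooth profile, paraboloid touching sets, a stacked covering lemma, and a geometric decay of $|A_{M^k}(u)\cap K_1|$. The one substantive difference is in the source of regularity for the approximant. You propose to approximate $u$ by a solution $w$ of the \emph{frozen} equation $\partial_t w - F(x_0,t_0,D^2 w)=0$ and then appeal to Theorem~\ref{theo:main1} (with constant coefficients and $f\equiv 0$) to get $C^{2,1}$ bounds on $w$. The paper instead uses Lemma~\ref{lemma_app} directly: under the small aperture assumption the compactness forces the limiting operator to be $\lambda\,\mathrm{tr}$, so the approximant is caloric and its smoothness is classical. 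Both routes close the argument; the paper's is slightly more self-contained (Theorem~\ref{theo:main3} does not logically depend on Theorem~\ref{theo:main1}), while yours is the natural ``replace Evans--Krylov by Theorem~\ref{theo:main1}'' drop-in for the CKS template.

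One point to tighten: the decay you wrote, $|\{G(u)>N^{k}\}\cap Q_{1/2}|\le C\sigma^{k}$, suppresses the contribution of $f$. Even after normalizing $\|f\|_{L^p}\le\delta$, the approximation step at a dyadic scale requires smallness of the \emph{local} $L^{d+1}$-norm of $f$ on that cylinder, which may fail at points where the maximal function $m(|f|^{d+1})$ is large. The paper (Proposition~\ref{prop_sob4}) handles this by enlarging the ``bad'' set at each step to include $\{m(f^{d+1})\ge (C M^{k})^{d+1}\}$, and then uses $f\in L^p \Rightarrow m(f^{d+1})\in L^{p/(d+1)}$ to sum the resulting series. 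You should insert the same correction; otherwise the iteration does not close for nonzero $f$.
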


In conclusion, our findings can play a crucial role in the study of regularity for other nonlinear parabolic models (cf. \cite{ARS22} and  \cite{AndSant22}), as well as in their applications to certain free boundary problems of obstacle-type (cf. \cite{AudKuk23} and \cite{LindMon13}).

\section{Preliminaries and main assumptions} \label{DPR}

\subsection{Notation}
In this section, we present the notation that will be used in this manuscript. By $d+1$, we denote the dimension of the Euclidean space $\R^{d+1}$. As usual, the {\it open ball} of radius $r$ and center at $x_0$ stands for $B_r(x_0)$. We denote the {\it parabolic cylinder} as
\[
Q_r(x_0, t_0) = B_r(x_0) \times (t_0 -r^2, t_0].
\]
When $(x_0,t_0) = (0,0)$,  we simply write $Q_r$. The {\it parabolic cube} of side $r$ is defined by
\[
K_r := [-r,r]^d\times[-r^2,0].
\]

Given $(x_1, t_1), (x_2, t_2) \in Q_r$ we define the {\it parabolic distance between them as
\[
dist((x_1, t_1), (x_2, t_2)) := |x_1-x_2| + \sqrt{|t_1-t_2|}.
\]

We say that $u : Q_r \rightarrow \R$ belongs to the {\it parabolic H\"older space} $C^{\alpha, \frac{\alpha}{2}}(Q_r)$ if
\[
\|u\|_{C^{\alpha, \frac{\alpha}{2}}(Q_r)} :=  \|u\|_{L^\infty(Q_r)} + [u]_{C^{\alpha, \frac{\alpha}{2}}(Q_r)} < \infty,
\]
where the semi-norm $[u]_{{\mathcal C}^{\alpha, \frac{\alpha}{2}}(Q_r)}$ is given by
\[
[u]_{C^{\alpha, \frac{\alpha}{2}}(Q_r)} := \sup_{(x, t),(y, s)\in Q_r \atop{(x, t) \not= (y, s)}} \frac{|u(x,t)-u(y, s)|}{\mbox{dist}((x, t),(y, s))^{\alpha}}.
\]

In particular, we have that $u$ is $\alpha$-H\"older continuous with respect to the spatial variables and $\frac{\alpha}{2}$-H\"older continuous with respect to the time variable.

Similarly, a function $u \in C^{1+\alpha, \frac{1+\alpha}{2}}(Q_r)$ if its spatial gradient $Du(x,t)$ exists in the classical sense and

\begin{align*}
\|u\|_{C^{1+ \alpha, \frac{1+\alpha}{2}}(Q_r)} := \;\; & \|u\|_{L^{\infty}(Q_r)} + \|Du\|_{L^{\infty}(Q_r)}
 + [Du]_{C^{\alpha, \frac{\alpha}{2}}(Q_r)}  < \infty
\end{align*}
where
$$
[Du]_{C^{\alpha, \frac{\alpha}{2}}(Q_r)}: = \sup_{(x, t),(y, s)\in Q_r \atop{(x, t) \not= (y, s), |\kappa|=1}} \frac{|D^{\kappa}u(x,t)-D^{\kappa}u(y, s)|}{dist((x, t),(y, s))^{\alpha}}.
$$

As before, we have that $Du$ is $\alpha$-H\"older continuous and $u$ is $\frac{1+\alpha}{2}$-H\"{o}lder continuous in the time variable. For more details, see \cite[Section 1]{CKS00}.

Finally, a function belongs to $C^{2+\alpha, \frac{2+\alpha}{2}}(Q_1)$ if its spatial Hessian $D^2u(x,t)$ and time derivative exist in the classical sense, and the norm

\begin{align*}
\|u\|_{C^{2+ \alpha, \frac{2+\alpha}{2}}(Q_r)} := \;\; & \|u\|_{L^{\infty}(Q_r)} +\|\partial_t u\|_{L^{\infty}(Q_r)} + \|Du\|_{L^{\infty}(Q_r)} + \|D^2u\|_{L^{\infty}(Q_r)} +  [u]_{C^{2+\alpha, \frac{2+\alpha}{2}}(\mathrm{Q}_r)}
\end{align*}
is finite, where
\[
[u]_{C^{2+\alpha, \frac{2+\alpha}{2}}({Q_r})} = [\partial_t u]_{C^{\alpha, \frac{\alpha}{2}}({Q_r})} + [D^2 u]_{C^{\alpha, \frac{\alpha}{2}}({Q_r})}.
\]

Again, we have that every component of $D^2u$ is $\alpha$-H\"older continuous, and the time derivative $\partial_t u$ is $\frac{\alpha}{2}$-H\"older continuous.

In the next two results we present alternatives ways to control the norm $\|u\|_{C^{2+\alpha, \frac{2+\alpha}{2}}(\mathrm{Q})}$, in terms of quantities like $[u]_{C^{2+\alpha, \frac{2+\alpha}{2}}(\mathrm{Q})}$ and $\|u\|_{L^{\infty}(\mathrm{Q})}$. We refer the reader to \cite{Krylov96}.

\begin{lemma}[{\bf Interpolation inequalities}]\label{Lem01} For all $\epsilon_0 > 0$, there exists $\mathrm{C}(\epsilon_0)>0$ such that for all $u \in C^{2+\alpha, \frac{2+\alpha}{2}}(\mathrm{Q})$
\[
\left\{
\begin{array}{r}
\displaystyle  \left\|\partial_t u\right\|_{L^{\infty}(\mathrm{Q})}  \leq  \epsilon_0[u]_{C^{2+\alpha, \frac{2+\alpha}{2}}(\mathrm{Q})} + \mathrm{C}(\epsilon_0)\|u\|_{L^{\infty}(\mathrm{Q})} \\
 \displaystyle \|D u\|_{L^{\infty}(\mathrm{Q})} \leq  \epsilon_0[u]_{C^{2+\alpha, \frac{2+\alpha}{2}}(\mathrm{Q})} + \mathrm{C}(\epsilon_0)\|u\|_{L^{\infty}(\mathrm{Q})} \\
 \displaystyle  \|D^2 u\|_{L^{\infty}(\mathrm{Q})}  \leq  \epsilon_0[u]_{C^{2+\alpha, \frac{2+\alpha}{2}}(\mathrm{Q})} + \mathrm{C}(\epsilon_0)\|u\|_{L^{\infty}(\mathrm{Q})}
\end{array}
\right.
\]
\end{lemma}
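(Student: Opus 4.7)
The plan is to establish each of the three inequalities by the standard finite-difference (or Taylor-expansion) technique, carefully respecting the parabolic scaling $|t-s|^{1/2}\sim|x-y|$ encoded in the $C^{2+\alpha,(2+\alpha)/2}$-norm. In each case the strategy is the same: express the derivative at a point as a divided difference of $u$ (which costs a negative power of the step $h$) up to a remainder controlled by the top seminorm (which gains a positive power $h^{\alpha}$), and then balance the two contributions by choosing $h$ appropriately small in terms of $\epsilon_0$. This is essentially the argument given in Krylov's monograph \cite{Krylov96}.

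First I treat $\|D^2u\|_{L^{\infty}(\mathrm{Q})}$. Fix $(x_0,t_0)\in\mathrm{Q}$, a unit vector $e\in\R^d$, and $h>0$ with $x_0+2he$ inside the time slice. Writing $g(s)=u(x_0+se,t_0)$, the second-order finite-difference identity
\[
g(2h)-2g(h)+g(0)=\int_0^h\!\!\int_\tau^{\tau+h}g''(\sigma)\,d\sigma\,d\tau
\]
combined with the $\alpha$-H\"older control of $D^2u$ in the parabolic metric yields
\[
h^2|\partial_{ee}u(x_0,t_0)|\le 4\|u\|_{L^{\infty}(\mathrm{Q})}+2^{\alpha}h^{2+\alpha}[D^2u]_{C^{\alpha,\alpha/2}(\mathrm{Q})}.
\]
Taking the supremum over $(x_0,t_0)$ and unit directions $e$, then selecting $h$ so that $2^{\alpha}h^{\alpha}=\epsilon_0$, produces the desired estimate for $\|D^2u\|_{L^{\infty}(\mathrm{Q})}$. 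The argument for $\|\partial_t u\|_{L^{\infty}(\mathrm{Q})}$ is entirely analogous: using
\[
u(x_0,t_0)-u(x_0,t_0-h^2)=h^2\partial_t u(x_0,t_0)+\int_{t_0-h^2}^{t_0}[\partial_t u(x_0,\sigma)-\partial_t u(x_0,t_0)]\,d\sigma,
\]
the parabolic $\alpha/2$-H\"older regularity of $\partial_t u$ produces a remainder of order $h^{2+\alpha}[\partial_t u]_{C^{\alpha,\alpha/2}(\mathrm{Q})}$, and the identical balancing step applies.

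For $\|Du\|_{L^{\infty}(\mathrm{Q})}$ I use a two-stage interpolation. The symmetric first-order finite difference $u(x_0+he,t_0)-u(x_0-he,t_0)=2h\,\partial_e u(x_0,t_0)+R$ with $|R|\le h^2\|D^2u\|_{L^{\infty}(\mathrm{Q})}$ gives the auxiliary bound
\[
\|Du\|_{L^{\infty}(\mathrm{Q})}\le\tfrac{h}{2}\|D^2u\|_{L^{\infty}(\mathrm{Q})}+\tfrac{1}{h}\|u\|_{L^{\infty}(\mathrm{Q})}
\]
for every sufficiently small $h>0$. Inserting the Hessian bound from the previous paragraph, applied with parameter $2\epsilon_0/h$, and then fixing $h$ to be a harmless constant (say $h=1$, up to a preliminary rescaling of $\mathrm{Q}$) produces the advertised inequality for $\|Du\|_{L^{\infty}(\mathrm{Q})}$, with the final $\mathrm{C}(\epsilon_0)$ expressed in terms of the $\mathrm{C}(\epsilon_0)$ obtained in the Hessian step.

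The main technical nuisance is ensuring that the auxiliary evaluation points used in each finite difference remain inside $\mathrm{Q}$: this forces the step $h$ to be no larger than a quantity depending on the diameter of $\mathrm{Q}$, so that $\mathrm{C}(\epsilon_0)$ inherits a (harmless but expected) dependence on the size of $\mathrm{Q}$. After the standard normalization $\mathrm{Q}=Q_1$ this is automatic, but the bookkeeping to match time-like against space-like increments consistently under the parabolic metric — and to propagate the gain $h^{\alpha}$ correctly through the two-stage interpolation used for $Du$ — is the only delicate point in the argument.
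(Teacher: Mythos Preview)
The paper does not supply its own proof of this lemma; it merely states the result and refers the reader to Krylov's monograph \cite{Krylov96}. Your finite-difference/Taylor-expansion argument is precisely the standard textbook derivation one finds there, so your proposal is correct and in fact fills in exactly the details the paper chose to omit.
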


\begin{lemma}[{\bf Equivalent semi-norms}]\label{Lem02} There exists $C \geq 1$ such that for
all $u \in C^{2+\alpha, \frac{2+\alpha}{2}}(\mathrm{Q})$ we have
\[
C^{-1}[u]^{\prime}_{C^{2+\alpha, \frac{2+\alpha}{2}}(\mathrm{Q})} \leq [u]_{C^{2+\alpha, \frac{2+\alpha}{2}}(\mathrm{Q})} \leq C [u]^{\prime}_{C^{2+\alpha, \frac{2+\alpha}{2}}(\mathrm{Q})}
\]
where
\[
[u]^{\prime}_{C^{2+\alpha, \frac{2+\alpha}{2}}(\mathrm{Q})} := \sup_{(x, t)  \in \mathrm{Q}} \sup_{\rho>0} \inf_{P \in \mathcal{P}} \frac{\|u-P\|_{L^{\infty}(Q_{\rho}\cap \mathrm{Q})}}{\rho^{2+\alpha}}
\]
and
\[
\mathcal{P} := \left\{\frac{1}{2}x^{T}Dx + Ct + B \cdot x +A : A\,, C \in \mathbb{R},\,B \in \mathbb{R}^d \;\text{ and }\; D \in \mbox{Sym}(d)\right\}.
\]
\end{lemma}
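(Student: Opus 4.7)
The plan is to prove the two-sided comparison by separately bounding $[u]'$ by $[u]$ and $[u]$ by $[u]'$, with the first being essentially a parabolic Taylor-expansion estimate and the second being the substantive direction.

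For the inequality $[u]'_{C^{2+\alpha,\frac{2+\alpha}{2}}(\mathrm{Q})} \leq C\,[u]_{C^{2+\alpha,\frac{2+\alpha}{2}}(\mathrm{Q})}$, I would fix $(x_0,t_0)\in \mathrm{Q}$ and define the parabolic second-order Taylor polynomial
\[
P_{(x_0,t_0)}(x,t) := u(x_0,t_0) + Du(x_0,t_0)\cdot(x-x_0) + \tfrac{1}{2}(x-x_0)^{T}D^{2}u(x_0,t_0)(x-x_0) + \partial_t u(x_0,t_0)(t-t_0),
\]
which lies in $\mathcal{P}$. Writing $u-P_{(x_0,t_0)}$ via the fundamental theorem of calculus in $x$ and in $t$ separately and invoking the defining Hölder bounds on $D^2 u$ and $\partial_t u$, one gets, for $(x,t)\in Q_\rho(x_0,t_0)\cap\mathrm{Q}$, the pointwise estimate $|u(x,t)-P_{(x_0,t_0)}(x,t)|\leq C\,[u]_{C^{2+\alpha,\frac{2+\alpha}{2}}}\rho^{2+\alpha}$, which is exactly what is needed.

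The reverse inequality $[u] \leq C\,[u]'$ is the main obstacle, since a priori the left-hand quantity involves the actual Hölder continuity of the classical derivatives while the right-hand one knows only about abstract polynomial approximants. My plan is the standard \emph{iteration/selection} argument. For each base point $z_0=(x_0,t_0)$ and each dyadic scale $\rho_k=2^{-k}$, pick $P_k \in \mathcal{P}$ nearly realizing the infimum so that $\|u-P_k\|_{L^\infty(Q_{\rho_k}(z_0)\cap\mathrm{Q})}\leq 2[u]'\rho_k^{2+\alpha}$. On $Q_{\rho_{k+1}}(z_0)$ both $P_k$ and $P_{k+1}$ are $C[u]'\rho_k^{2+\alpha}$-close to $u$, hence to one another; since $P_{k+1}-P_k\in \mathcal{P}$, a Bernstein/coefficient bound for polynomials of fixed degree on a parabolic cylinder gives
\[
|A_{k+1}-A_k| + \rho_k|B_{k+1}-B_k| + \rho_k^{2}|C_{k+1}-C_k| + \rho_k^{2}\|D_{k+1}-D_k\| \leq C[u]'\rho_k^{2+\alpha},
\]
where $A_k,B_k,C_k,D_k$ are the coefficients of $P_k$. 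Summing the geometric series identifies limits $B_\infty, C_\infty, D_\infty$ that must coincide with $Du(z_0)$, $\partial_t u(z_0)$, $D^2u(z_0)$ respectively (here one uses that $u$ is already assumed $C^{2+\alpha,\frac{2+\alpha}{2}}$, so the classical derivatives exist and the uniqueness of second-order parabolic Taylor expansions applies), together with the quantitative bound
\[
|B_k - Du(z_0)| + \rho_k|C_k - \partial_t u(z_0)| + \rho_k\|D_k - D^2 u(z_0)\| \leq C[u]'\rho_k^{1+\alpha}.
\]

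Finally, to recover the Hölder seminorms on $D^2 u$ and $\partial_t u$, I would compare the selected polynomials $P^{z_0}_k$ and $P^{z_1}_k$ at two base points $z_0,z_1\in\mathrm{Q}$ with $\mathrm{dist}(z_0,z_1)=:r$, at the scale $\rho_k\simeq r$. On the common cylinder $Q_{\rho_k}(z_0)\cap Q_{\rho_k}(z_1)\cap\mathrm{Q}$ (which contains a sub-cylinder of comparable size), each $P^{z_i}_k$ is $C[u]'r^{2+\alpha}$-close to $u$, so their difference is a polynomial in $\mathcal{P}$ of sup-norm at most $C[u]'r^{2+\alpha}$ on a cylinder of size $\simeq r$; the same coefficient inequality then produces $\|D^{2}u(z_0)-D^{2}u(z_1)\|+|\partial_t u(z_0)-\partial_t u(z_1)|\,r^{-\alpha}\cdot r^\alpha \leq C[u]'r^{\alpha}$, yielding the desired Hölder seminorm control. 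The interpolation Lemma~\ref{Lem01} then absorbs the lower order terms and closes the estimate $[u]_{C^{2+\alpha,\frac{2+\alpha}{2}}(\mathrm{Q})}\leq C[u]'_{C^{2+\alpha,\frac{2+\alpha}{2}}(\mathrm{Q})}$. The delicate point throughout is the uniform control of polynomial coefficients from $L^\infty$ bounds on parabolic cylinders, but since $\mathcal{P}$ is a fixed finite-dimensional space, all norms on it are equivalent up to scaling, which is exactly what drives every inequality above.
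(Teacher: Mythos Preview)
The paper does not actually give a proof of this lemma: it is stated as a preliminary fact with a blanket reference to Krylov's monograph \cite{Krylov96}, and no argument is supplied in the text. So there is no ``paper's own proof'' to compare against beyond that citation.

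Your sketch is the standard route (and is essentially the argument one finds in Krylov): Taylor expansion for the easy direction, dyadic selection of near-optimal polynomials plus coefficient comparison via finite-dimensional norm equivalence for the substantive one, followed by a two-point comparison at the separation scale. One small point worth flagging in the first inequality: the decomposition of $u - P_{(x_0,t_0)}$ also produces a term of the form $(Du(x_0,t)-Du(x_0,t_0))\cdot(x-x_0)$, and controlling it by the \emph{seminorm} $[u]_{C^{2+\alpha,\frac{2+\alpha}{2}}}$ (rather than the full norm) requires the auxiliary fact that the time-H\"older oscillation of $Du$ is itself dominated by $[D^2u]_{C^{\alpha,\alpha/2}}+[\partial_t u]_{C^{\alpha,\alpha/2}}$; this follows from a short finite-difference argument with step $|h|\sim|t-s|^{1/2}$, but it is not quite ``directly from the defining H\"older bounds''. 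In the reverse direction your two-point comparison tacitly assumes that $Q_{\rho_k}(z_0)\cap Q_{\rho_k}(z_1)\cap\mathrm{Q}$ contains a parabolic sub-cylinder of size comparable to $r$; this is unproblematic when $\mathrm{Q}$ is itself a parabolic cylinder (as in every application in the paper), but would need a mild geometric condition on $\mathrm{Q}$ in general. Neither point is a genuine gap in the plan.
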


Now, the parabolic space $C^{1, \text{Log-Lip}}(Q_r)$ consists of functions $u \in C(Q_r)$ such that the quantity

\[
\displaystyle [u]_{C^{1, \text{Log-Lip}}(Q_r(x_0, t_0))} \defeq   \sup_{(x, t)  \in Q_r(x_0, t_0)} \sup_{\rho>0} \inf_{L \in \mathcal{L}} \frac{|u(x, t)-L(x, t)|}{r^2\log r^{-1}}
\]
is finite, where
$$
\mathcal{L} := \left\{B \cdot x +A : A \in \mathbb{R} \quad \text{and} \quad B \in \mathbb{R}^d \right\}.
$$
Moreover, the $C^{1, \text{Log-Lip}}$-norm is defined by
\[
\|u\|_{C^{1, \text{Log-Lip}}(Q_r(x_0, t_0))} \defeq \|u\|_{L^{\infty}(Q_r(x_0, t_0))} + \|Du\|_{L^{\infty}(Q_r(x_0, t_0))}+ [u]_{C^{1, \text{Log-Lip}}(Q_r(x_0, t_0))}.
\]

Given $p \in [1,\infty]$, the parabolic Sobolev space $W^{2,1,p}(Q_r)$ consists of functions $u \in L^p(Q_r)$ that satisfies $D u, D^2 u, u_t \in L^p(Q_1)$, i.e.,
\[
W^{2,1,p}(Q_1) := \{ u \in L^p(Q_r)\;|\; Du, D^2 u, u_t \in L^p(Q_r)\}.
\]
The corresponding norm is given by
\[
\|u\|_{W^{2,1,p}(Q_r)} := \left[\|u\|^p_{L^p{(Q_r)}}+\|u_t\|_{L^{p}(Q_r)}^p+\|D u\|_{L^{p}(Q_r)}^p+ \|D^2 u\|_{L^{p}(Q_r)}^p\right]^{\frac{1}{p}}.
\]

\subsection{Definitions and auxiliary results}

We define the Pucci's extremal operators $\mathcal{M}^{\pm}_{\lambda,\Lambda}$ as
\[
\mathcal{M}^{+}_{\lambda,\Lambda}(M) \defeq \lambda \cdot \sum_{e_i <0} e_i  + \Lambda \cdot \sum_{e_i >0} e_i \;\; \mbox{ and } \;\; \mathcal{M}^{-}_{\lambda,\Lambda}(M) \defeq \lambda \cdot \sum_{e_i >0} e_i  + \Lambda \cdot \sum_{e_i <0} e_i,
\]
where $\{e_i : 1 \le i \le d\}$ denote the eigenvalues of $M$. An operator $F : \mathrm{Q}_1 \times Sym(d) \to \R$ is called uniformly elliptic if there exist constants $0 < \lambda \leq \Lambda$ (ellipticity constants) such that for any $M, N \in\text{Sym}(d) $, with $N\ge 0$ and all $(x, t) \in Q_1$, we have
\begin{equation} \label{eq_ellip}
   \lambda\|N\|\leq F(x, t, M+N)-F(x, t, M)\leq\ \Lambda\|N\|.
\end{equation}
Equivalently, we also have for all $M,N \in Sym(d)$ and $(x,t) \in Q_1$
\[
\mathcal{M}^-(M-N) \leq F(x, t, M)-F(x, t, N) \leq \mathcal{M}^+(M-N).
\]
We say that a constant is {\it universal} if it only depends on $\lambda$, $\Lambda$ and $d$.

In what follows, we introduce the viscosity solutions, the appropriate notion of weak solutions for fully nonlinear elliptic equations. We refer the reader to \cite{CKS00} and \cite{Wang92-I}.

\begin{definition}[{\bf $L^{p}-$viscosity solutions}]\label{VS} Let $F: Q_1 \times Sym(d)\to \mathbb{R}$ be a uniformly elliptic operator, $p> \frac{d+2}{2}$ and $f \in L_{loc}^{p}(Q_1)$. We say that a function $u \in C(Q_1)$ is an $L^{p}-$viscosity sub-solution (respectively super-solution) to

\begin{equation}\label{eqVS}
   \partial_t u(x, t) - F\left(x,t, D^2 u(x,t)\right) = f(x,t) \quad in \quad Q_1,
\end{equation}
if for all $\varphi \in W_{loc}^{2, 1 , p}(Q_1)$, whenever $\varepsilon >0$ and $\mathcal{O} \subset Q_1$ is an open set and
$$
\partial_t \varphi(x, t)-F\left(x, t, D^2 \varphi(x, t)\right) - f(x, t) \geq  \varepsilon \quad \left(resp. \leq - \varepsilon\right)\quad a.e. \quad in \quad \mathcal{O},
$$
then $u-\varphi$ cannot attain a local maximum (resp. minimum) in $\mathcal{O}$. In an equivalent manner, $u$ is an $L^{p}-$viscosity sub-solution (resp. super-solution) if for all test function $\varphi \in W_{loc}^{1, 2 , p} (Q_1)$ and $(x_0, t_0) \in Q_1$ at which $u - \varphi$ attains a local maximum (resp. minimum) one has

\begin{equation}\label{DefVisSol}
\left\{\begin{array}{c}
\displaystyle \stackrel[(x, t) \to (x_0, t_0)]{}{\essliminf} \left[\partial_t \varphi(x, t)-F\left(x, t, D^2 \varphi(x, t)\right) - f(x, t)\right] \leq 0\\
    \displaystyle \stackrel[(x, t) \to (x_0, t_0)]{}{\esslimsup}  \left[\partial_t \varphi(x, t)-F\left(x, t, D^2 \varphi(x, t)\right) - f(x, t)\right] \geq 0
\end{array}
\right.
\end{equation}

Finally, we say that $u$ is an $L^{p}-$viscosity solution to \eqref{eqVS} if it is both an $L^{p}-$viscosity super-solution and an $L^{p}-$viscosity sub-solution.
\end{definition}

\begin{remark}
We say that a function $u \in C(Q_1)$ is a $C$-viscosity solution to \eqref{eqVS} when the sentences in \eqref{DefVisSol} are evaluated point-wisely for all ``test functions'' $\varphi \in C_{loc}^{2, 1}(Q_1)$. This is the notion used by Imbert-Silvestre in \cite{ImbSilv13} and Wang in \cite{Wang92-I} and \cite{Wang92-II}.
\end{remark}

\begin{definition}[{\bf The class of viscosity solutions}]
Let $f\in{C}(Q_1)$ and $0<\lambda\leq\Lambda.$ We say that $u \in C(Q_1)$ belongs to the class $\underline{S}(\lambda, \Lambda, f)$ if
\[
	\partial_t u -\mathcal{M}^+_{\lambda,\Lambda}(D^2u)\,\leq\, f(x,t) \;\; \mbox{ in } \;\; Q_1
\]
in the viscosity sense. Similarly, $u \in C(Q_1)$ belongs to the class $\overline{S}(\lambda, \Lambda, f)$ if
\[
	\partial_tu-\mathcal{M}^-_{\lambda,\Lambda}(D^2u)\,\geq\, f(x,t) \,\, \mbox{ in } \,\, Q_1
\]
	in the viscosity sense. Finally, the class of $(\lambda, \Lambda)$-viscosity solutions is defined by
\[
	S(\lambda,\Lambda,f)=\overline{S}(\lambda, \Lambda, f)\cap\underline{S}(\lambda, \Lambda, f).
\]
\end{definition}

For a fixed $(x_0, t_0) \in Q_1$, the oscillation of the coefficients of $F$ around $(x_0, t_0)$ is defined by
\begin{equation}\label{eq_osc}
\displaystyle \Theta_F(x_0, t_0, x, t) := \sup_{M \in Sym(n)}
\frac{|F(x, t, M)-F(x_0, t_0, M)|}{\|M\|+1}.
\end{equation}
By simplicity, we denote $\Theta_F(0, 0, x, t) = \Theta_F(x, t)$.

In the next, we present some measure notions that we use in this manuscript. We refer the reader to \cite{Wang92-II} for more details.

\begin{definition}
Given a affine function	$L:Q_1\rightarrow\mathbb{R}$ and a positive constant $M$, the paraboloid of opening $M$ is denoted by
\[
	P_{M}(x,t) = L(x,t) \pm M(|x|^2 + |t|).
\]
Moreover, we introduce the sets	
\[
	\underline{G}_{M}(u, Q) := \{(x_0,t_0) \in Q: \exists \, P_{M} \;\mbox{ that touches } u \mbox{ from bellow at} \; (x_0,t_0)\},
\]
\[
	\overline{G}_{M}(u, Q):\{(x_0,t_0)\in Q: \exists \, P_{M} \; \mbox{ that touches } u \mbox{ from above at} \; (x_0,t_0)\},
\]
and
\[
	G_M(u,Q):= \underline{G}_{M}(u, Q)\cap \overline{G}_{M}(u, Q).
\]
Finally, we denote
\[
	\underline{A}_M(u,Q):= Q\setminus\underline{G}_M(u,Q),\;\;\;\; \bar{A}_M(u,Q) := Q\setminus\overline{G}_M(u,Q)
\]
	and
\[
	A_M(u,Q):= \underline{A}_{M}(u, Q)\cup \bar{A}_{M}(u, Q).
\]
\end{definition}

Starting with $K_1$, a dyadic cube is constructed by iteratively following this procedure: We divide the sides of $K_1$ into two equal segments in the x-axis and four equal segments in the t-axis. This division is also applied to the $2^{d+2}$ resulting cubes, and the process is repeated. Each cube generated in this manner is referred to as a dyadic cube. We refer to $\tilde{K}$ as a precursor to a cube $K$, if $K$ is one of the $2^{d+2}$ cubes created by subdividing the sides of $\tilde{K}$.

In addition, given $m \in \mathbb{N}$ and a dyadic cube $K$, the set $\bar{K}^m$ is obtained by staking $m$ copies of its predecessor $\bar{K}$; in other words, if $\bar{K}$ has the form $(a,b)\times L$, then $\bar{K}^m = (b,b + m(b-a))\times L$. With this notation we have the following result (see \cite[Lemma 4.27]{ImbSilv13} and \cite{Wang92-II}):

\begin{lemma}[{ \bf Stacked covering lemma}]\label{lem_cov}
Let $m \in \mathbb{N}$, $A \subset B \subset K_1$ and $0<\rho <1$. Suppose that
\begin{itemize}
\item[(i)] $|A| \leq \rho|K_1|$;
\item[(ii)] If $K$ is dyadic cube of $K_1$ such that $|K \cap A| > \rho|K|$, then $\bar{K}^m\subset B$.
\end{itemize}
Then $|A| \leq \dfrac{\rho(m+1)}{m} |B|$.

Here $|\cdot|$ stands for the $(d+1)$-Lebesgue measure.
\end{lemma}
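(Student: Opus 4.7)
The plan is to prove the lemma by coupling a Calder\'on--Zygmund dyadic stopping-time decomposition with a slice-wise measure argument that exploits the vertical ``stacking'' built into hypothesis (ii). First I would iteratively subdivide $K_1$: since by hypothesis (i) the ambient cube itself is \emph{good}, meaning $|K_1\cap A|\le\rho|K_1|$, the standard procedure of passing from a good cube to its $2^{d+2}$ dyadic children and stopping at the first generation where the density threshold $|K\cap A|>\rho|K|$ is exceeded yields a disjoint family $\mathcal{F}=\{K_j\}_j$ of maximal \emph{bad} dyadic subcubes of $K_1$. By construction each $K_j$ satisfies $|K_j\cap A|>\rho|K_j|$ while its dyadic parent $\bar K_j$ obeys $|\bar K_j\cap A|\le\rho|\bar K_j|$, and Lebesgue's differentiation theorem guarantees that $A\subset\bigcup_j K_j$ up to a set of Lebesgue measure zero.

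From the collection of parents $\{\bar K_j\}_j$ I would extract the inclusion-maximal elements; since any two dyadic cubes are either disjoint or nested, this yields a pairwise disjoint sub-family $\{P_i\}_i$ with $\bigcup_j K_j\subset\bigcup_i P_i$ and $|A\cap P_i|\le\rho|P_i|$ for every $i$. Summing over the disjoint $P_i$'s gives
\[
|A|\;=\;\sum_i |A\cap P_i|\;\le\;\rho\sum_i |P_i|.
\]
Moreover, hypothesis (ii) applied to any bad child of $P_i$ produces $P_i^{(m)}\subset B$, where $P_i^{(m)}=(b_i,b_i+m(b_i-a_i))\times L_i$ denotes the stack of $m$ copies of $P_i=(a_i,b_i)\times L_i$ placed immediately above $P_i$ in time, in accordance with the definition of $\bar K^m$.

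The heart of the argument is then the lower bound
\[
|B|\;\ge\;\Bigl|\bigcup_i P_i^{(m)}\Bigr|\;\ge\;\frac{m}{m+1}\sum_i |P_i|,
\]
after which combining with the previous display immediately yields $|A|\le\frac{\rho(m+1)}{m}|B|$. To establish it I would use Fubini and reduce to a slice-wise one-dimensional estimate: fixing a spatial point $x$, the parents $P_i$ whose spatial footprint contains $x$ induce, in the corresponding time slice, a \emph{disjoint} family of intervals $(a_\iota,b_\iota)$ of lengths $h_\iota=b_\iota-a_\iota$, while each stack contributes the extended interval $(b_\iota,b_\iota+mh_\iota)$. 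The central technical step is the one-dimensional inequality
\[
\Bigl|\bigcup_\iota (b_\iota,b_\iota+mh_\iota)\Bigr|\;\ge\;\frac{m}{m+1}\sum_\iota h_\iota,
\]
which I would prove by induction on the number of intervals, processed from right to left: the decisive geometric point is that disjointness of the originals forces the gap $b_{\iota+1}-b_\iota\ge h_{\iota+1}$, which is exactly what controls the cancellation coming from overlapping stacks. Integrating this pointwise bound in $x$ via Fubini completes the proof. The principal obstacle is precisely this 1D inequality, since \emph{a priori} the stacked intervals may overlap substantially; the argument works only because the disjointness of the parents in each slice imposes the uniform spacing constraint just mentioned, whence the factor $m/(m+1)$ arises as the asymptotic ``worst case'' ratio when many same-footprint parents are packed consecutively.
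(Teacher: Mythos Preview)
The paper does not actually prove this lemma; it states it and cites \cite[Lemma 4.27]{ImbSilv13} and \cite{Wang92-II} for the argument. Your overall strategy --- Calder\'on--Zygmund stopping time, extraction of the disjoint family of maximal predecessors $\{P_i\}$, the bound $|A|\le\rho\sum_i|P_i|$, the inclusion $P_i^{(m)}\subset B$ via hypothesis~(ii), and the Fubini reduction to a one-dimensional estimate --- is precisely the standard argument carried out in those references, and all of these structural steps are correct.

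The only soft spot is your sketch of the one-dimensional inequality
\[
\Bigl|\bigcup_\iota (b_\iota,b_\iota+mh_\iota)\Bigr|\;\ge\;\frac{m}{m+1}\sum_\iota h_\iota.
\]
A bare right-to-left induction relying only on the gap $b_{\iota+1}-b_\iota\ge h_{\iota+1}$ does not close: when you adjoin $J_k=(b_k,b_k+mh_k)$ to $\bigcup_{\iota>k}J_\iota$, the fresh measure guaranteed below $b_{k+1}$ is only $\min(mh_k,\,b_{k+1}-b_k)\ge h_{k+1}$, which need not dominate the increment $\tfrac{m}{m+1}h_k$ demanded by the unstrengthened inductive hypothesis when $h_{k+1}\ll h_k$ (think of a geometric sequence of originals). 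The inequality is nonetheless true, and a clean direct argument proceeds component-wise: for each connected component $C=(\ell,r)$ of $\bigcup_\iota J_\iota$, let $\iota^\ast$ be the index with $b_{\iota^\ast}=\ell$; then $J_{\iota^\ast}\subset C$ forces $h_{\iota^\ast}\le|C|/m$, while for every other contributing index disjointness of the originals gives $a_\iota\ge b_{\iota^\ast}=\ell$ and $b_\iota<r$, so $I_\iota\subset[\ell,r)$ and hence $\sum_{\iota\ne\iota^\ast}h_\iota\le|C|$. Together this yields $\sum_\iota h_\iota\le\tfrac{m+1}{m}|C|$ on each component, and summing over components delivers the bound. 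With this adjustment your proof is complete and matches the approach in the cited sources.
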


We close this section with some auxiliary results regarding the norms of a solution $u$. We start with a parabolic A.B.P.K.T.-estimate, which ensures that we can consider bounded viscosity solutions. See \cite{Kryl76, Krylov76,Krylov87,Tso85} for more details.

\begin{lemma}[{\bf Aleksandrov-Bakelman-Pucci-Krylov-Tso Maximum Principle}]
Let $u \in W_{\text{loc}}^{2, 1, d+1}(\mathrm{Q}) \cap C(\bar{Q})$ be a strong subsolution of
\[
\partial u_t - \mathcal{M}^{-}_{\lambda,\Lambda}(D^2 u) -  \gamma|Du| \le f(t, x) \quad \text{in} \quad \mathrm{Q},
\]
where, $Q = (-T, 0] \times \Omega$, $\gamma \geq  0$ and $f \in L^{d+1}(\mathrm{Q})$. Then, there exists a positive constant $C = C(d, \lambda, \Lambda, \gamma, \mbox{diam}(\Omega))$, such that
\[
 \displaystyle \sup_{\mathrm{Q}} u(x, t) \le \sup_{\partial_p \mathrm{Q}} u(x, t) + \mathrm{C}\cdot (\diam(\Omega))^{\frac{d}{d+1}} \|f\|_{L^{d+1}(\Gamma^+_{\mathrm{Q}})}.
\]
where $\Gamma^+_{\mathrm{Q}}$ is the parabolic upper contact set of $u$.
\end{lemma}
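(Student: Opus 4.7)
The plan is to follow the classical Tso–Krylov scheme for parabolic ABP, adapted to handle the first-order drift term $\gamma|Du|$ produced by the Pucci inequality. First I would normalize: replace $u$ by $u - \sup_{\partial_p Q} u$ so that $u \le 0$ on the parabolic boundary, and set $M := \sup_Q u$; if $M \le 0$ the estimate is trivial, so assume $M > 0$ and note that the supremum is attained at an interior point. The next step is to introduce the parabolic upper contact set $\Gamma^+_Q$ of points $(x_0,t_0)$ at which there exists an affine function in $x$ alone (a supporting hyperplane) lying above $u(\cdot,t)$ for every $t \le t_0$. At such points the classical Hessian satisfies $D^2 u(x_0,t_0) \le 0$ and $\partial_t u(x_0,t_0) \ge 0$, which are the two sign conditions that make the whole argument work.

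Second, I would introduce the Legendre-type transform $\Phi(x,t) := \bigl(Du(x,t),\, u(x,t) - x\cdot Du(x,t)\bigr)$ restricted to $\Gamma^+_Q$, and verify by a geometric argument that its image in $\mathbb{R}^d \times \mathbb{R}$ covers a region of Lebesgue measure at least $c_d M^{d+1}/(\mathrm{diam}\,\Omega)^d$; this is the parabolic analogue of the fact that the normal map of the concave envelope of $u^+$ covers a ball of radius $\sim M/\mathrm{diam}\,\Omega$ in $\mathbb{R}^d$, with an extra factor $M$ coming from the time variable. The area formula then gives
\[
\frac{c_d M^{d+1}}{(\mathrm{diam}\,\Omega)^d} \le \int_{\Gamma^+_Q} |\mathrm{Jac}\,\Phi(x,t)|\, dx\, dt.
\]

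Third, I would estimate the Jacobian pointwise on $\Gamma^+_Q$. A direct computation yields $|\mathrm{Jac}\,\Phi| = |\det D^2 u|\cdot \partial_t u$ (both factors nonnegative on the contact set). AM–GM on the nonpositive matrix $D^2 u$ gives $|\det D^2 u|^{1/d} \le -\tfrac{1}{d}\,\mathrm{tr}(D^2 u)$, and on the contact set the Pucci operator reduces to $\mathcal{M}^-_{\lambda,\Lambda}(D^2 u) = \Lambda\, \mathrm{tr}(D^2 u)$, so the subsolution inequality becomes $\partial_t u - \Lambda\, \mathrm{tr}(D^2 u) \le f + \gamma|Du|$. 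Combining these with the elementary bound $ab \le \bigl(\tfrac{a+b}{2}\bigr)^2$ (or more sharply $ab^d \le C(d)(a + b)^{d+1}$) yields
\[
|\det D^2 u|\cdot \partial_t u \le C(d,\lambda,\Lambda)\bigl(|f| + \gamma|Du|\bigr)^{d+1}.
\]

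The main technical obstacle is handling the drift term $\gamma|Du|$. On $\Gamma^+_Q$ the existence of the supporting hyperplane forces $|Du(x_0,t_0)| \le 2M/\mathrm{diam}(\Omega)$ (or can be absorbed into $\|f\|_{L^{d+1}}$ after a Gronwall-type iteration on dyadic time slices, as in Tso). After this absorption and inserting the pointwise Jacobian bound into the area formula, one arrives at
\[
M^{d+1} \le C(d,\lambda,\Lambda,\gamma,\mathrm{diam}\,\Omega)\,(\mathrm{diam}\,\Omega)^{d}\int_{\Gamma^+_Q}|f|^{d+1}\,dx\,dt,
\]
and taking the $(d{+}1)$-th root gives the stated inequality. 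I would finish by pointing out that the dependence of the constant on $\gamma$ and $\mathrm{diam}(\Omega)$ enters exactly through the drift absorption step, exactly as in \cite{Kryl76,Tso85}.
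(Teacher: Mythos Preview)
The paper does not actually prove this lemma: it is stated as an auxiliary result and the reader is referred to \cite{Kryl76,Krylov76,Krylov87,Tso85} for details. There is therefore no ``paper's own proof'' to compare against; the authors simply import the ABPKT estimate from the literature.

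Your sketch follows the classical Tso--Krylov scheme from those very references, and the overall architecture (normalize, introduce the parabolic upper contact set, use the monotone/Legendre map, area formula, pointwise Jacobian bound via AM--GM and the equation) is correct. Two points would need tightening if you were to write this out in full. First, the formula $|\mathrm{Jac}\,\Phi| = |\det D^2 u|\cdot \partial_t u$ for the map $\Phi(x,t) = (Du,\, u - x\cdot Du)$ is not quite right as stated: Tso's actual argument uses the normal map $x \mapsto Du(x,t)$ in the $x$-variables only (with Jacobian $|\det D^2 u|$) and handles the time direction via a separate monotonicity argument, rather than bundling both into a single $(d{+}1)$-dimensional change of variables. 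Second, the drift absorption step is the genuinely delicate part: your two suggestions (the geometric bound $|Du| \le 2M/\mathrm{diam}(\Omega)$ on the contact set, or a Gronwall-type iteration) are both viable, but the first one feeds $M$ back into the right-hand side and one must check that the resulting implicit inequality $M^{d+1} \le C(\|f\|_{L^{d+1}} + \gamma M/\mathrm{diam}(\Omega))^{d+1}(\mathrm{diam}\,\Omega)^d$ can actually be closed; this is where the dependence of $C$ on $\gamma$ and $\mathrm{diam}(\Omega)$ really enters, and it deserves a line of justification rather than a parenthetical.
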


Finally, we recall interior derivative estimates for caloric equations. See \cite[Chap. 8, p. 116]{Krylov96}.

\begin{lemma}[{\bf Interior derivative estimates}]\label{EstIntDer} There exists a positive constant $C$ such that any solution of
\[
\partial_t h = \Delta h \;\;\mbox{ in }\;\;\mathrm{Q}_R
\]
satisfies
\[
\left|\partial^k_t D^{\beta} h(0, 0)\right| \leq \mathrm{C}\frac{\|h\|_{L^{\infty}(Q_R)}}{R^{2k+|\beta|}},
\]
where $\beta = (\beta_1, \cdots, \beta_n)$ is a multi-index, $\displaystyle |\beta| = \sum_{i=1}^{n} \beta_i$ and $D^{\beta} h = \partial_{x_1}^{\beta_1} \ldots \partial_{x_n}^{\beta_n}h$.
\end{lemma}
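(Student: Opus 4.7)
The plan is to first reduce to $R=1$ by the natural parabolic scaling and then establish the unit-scale bound by induction on the parabolic order $2k+|\beta|$, using the smoothness of caloric functions as the underlying tool. Setting $v(x,t):=h(Rx,R^{2}t)$ on $Q_1$, a direct computation yields $\partial_t v=\Delta v$ in $Q_1$, $\|v\|_{L^{\infty}(Q_1)}=\|h\|_{L^{\infty}(Q_R)}$, and
\[
\partial_t^{k} D^{\beta} v(0,0) \;=\; R^{\,2k+|\beta|}\, \partial_t^{k} D^{\beta} h(0,0),
\]
so it suffices to prove the inequality $|\partial_t^{k} D^{\beta} v(0,0)|\le C(k,|\beta|)\,\|v\|_{L^{\infty}(Q_1)}$ for any caloric $v$ on $Q_1$.

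For the base case $2k+|\beta|=1$, I would establish the universal estimate
\[
\sup_{Q_{1/2}} \bigl(|Dv|+|\partial_t v|\bigr)\;\le\; C_0 \,\|v\|_{L^{\infty}(Q_1)}.
\]
One route is to represent $\eta v$ via the Gaussian heat kernel, where $\eta\in C_c^{\infty}(Q_{3/4})$ equals $1$ on $Q_{1/2}$, exploiting the fact that all derivatives of the kernel are smooth and integrable away from the singularity and taking the required number of derivatives under the integral sign. Equivalently, one may run a Bernstein-type argument on $|Dv|^{2}$ with a parabolic cutoff, and then use the equation $\partial_t v=\Delta v$ to bound $\partial_t v$ in terms of the spatial second derivatives.

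The inductive step rests on the crucial observation that, whenever $v$ is caloric, so is each derivative $\partial_t^{i}D^{\gamma}v$ in the interior (by hypoellipticity of the heat operator). Applying the base-case estimate to a suitable derivative $w$ of $v$ of one order less on a slightly smaller cylinder, and iterating $2k+|\beta|-1$ times while shrinking the domain by a fixed factor at each step, yields the pointwise bound at $R=1$. Undoing the scaling restores the sharp weight $R^{2k+|\beta|}$ enforced by the parabolic homogeneity $(x,t)\mapsto (Rx,R^{2}t)$. The main technical obstacle is the base case, since it demands extracting derivative information from a merely sup-norm bound; once it is in place, the iteration is clean and the dependence of the constant on $k$ and $|\beta|$ is tracked automatically.
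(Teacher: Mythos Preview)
Your approach is correct and entirely standard: the parabolic rescaling reduces to $R=1$, the base case is the classical first-derivative estimate for caloric functions (via heat-kernel representation or a Bernstein argument), and the induction works because every derivative of a caloric function is again caloric. Note, however, that the paper does not supply its own proof of this lemma; it simply quotes the result from Krylov's textbook \cite[Chap.~8, p.~116]{Krylov96}, so there is no in-paper argument to compare against. Your sketch is exactly the kind of proof one finds in that reference.
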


\subsection{Main assumptions}\label{Assump}

In this subsection, we detail the main assumptions used throughout the paper. We start with an assumption on the operator $F$.

\begin{Assumption}\label{Assump1}[{\bf Uniform Ellipticity}] We suppose that the operator $F: \mathrm{Q}_1 \times Sym(d) \to \R$ is a uniformly $(\lambda, \Lambda)$-elliptic operator.
\end{Assumption}
\begin{Assumption}\label{A2}[{\bf Reducibility condition}] We will assume that
\begin{equation} \label{eq_normalization}
    F(0,0, 0_{d\times d}) =  f(0,0) = 0.
\end{equation}
\end{Assumption}
Notice that Assumption A\ref{A2}, is not restrictive. In fact, we can always define $G(x,t,M) := F(x,t,M) - F(0, 0, 0_{n\times n})$, so that $G$ satisfies A\ref{A2} (and A\ref{Assump1}). Similarly, we can define $g(x,t) := f(x,t)-f(0,0)$. In the next, we impose some regularity conditions on the operator $F$ and the source term $f$.
\begin{Assumption}\label{A3}[{\bf Parabolic H\"{o}lder condition}] For a modulus of continuity $\tau$, there holds
\begin{equation}\label{cont coeff}
    \frac{|F(x, t, M)-F(y, s, M)|}{\|M\|+1} \leq  \tau(dist((x, t),(y, s))), \quad \mbox{ for } M \in \mbox{Sym}(d).
\end{equation}
\begin{equation}\label{cont source}
     |f(x, t)- f(y, s)| \leq   \tau(dist((x, t),(y, s))), \quad    \mbox{ for } (x, t),(y, s)\in Q_{1}.
\end{equation}
where $\tau(s) \leq \mathrm{C}_0s^{\alpha}$ for some $\alpha \in (0, 1)$.
\end{Assumption}
Notice that such conditions are essential to surpass the $C^{2,1}$-threshold. See for instance \cite{CZ02}, \cite{TW13} for surveys on this topic in the fully nonlinear framework. Next, we present the assumption we impose on $f$ to obtain the Log-Lipschitz estimates.

\begin{Assumption}\label{ALL}[{\bf Boundedness of the source term}\,] We suppose that $f \in L^\infty(Q_1)$.
\end{Assumption}
Finally, we give the necessary conditions on $f$ to obtain the Sobolev regularity.

\begin{Assumption}\label{A4}[{\bf Integrability of the source term}\,] We suppose that $f \in L^p(Q_1)$, for $p>d+1$.
\end{Assumption}

\medskip

The remainder of this paper is structured as follows: In Section 3, we derive the Schauder estimates. Section 4 is devoted to the proof of the Log-Lipschitz regularity. In Section 5, we give the proof of the regularity in Sobolev spaces. We close the paper with some applications of our results in Section 6.

\section{Schauder estimates}\label{sect_scha}

This section is devoted to the prove of Theorem \ref{theo:main1}. We start with a crucial lemma, which states that our solutions can be approximated by a bounded second order polynomial.

\smallbreak

\begin{lemma}[{\bf Approximation Lemma}]\label{lemma_app} Let $u \in C(Q_1)$ be a normalized viscosity solution to \eqref{eq_main}. Suppose that A\ref{Assump1} and A\ref{A2} are in force. There exists $\varepsilon > 0$ such that if
\[
   \Theta_F(x, t) + \|f\|_{L^{\infty }(Q_{1})} \leq \varepsilon, \;\; \mbox{ and } \;\; \Lambda \leq (1+\varepsilon)\lambda,	\\\\\\
\]
then we can find $0 < \rho \ll 1$ and a second order polynomial $P$ of the form
\[
P(x,t) := \dfrac{1}{2}x^TDx + Ct + B\cdot x + A,
\]
with $|A|, |B|, |C|, |D| \leq M$, that solves
\begin{equation}\label{eq_poly}
\partial_t P - \lambda\tr(D^2P) = 0 \;\; \mbox{ in } \; Q_{4/5}.
\end{equation}
and satisfies
\[
\sup_{Q_{\rho}} |u-P| \leq \rho^{2+\alpha}.
\]
\end{lemma}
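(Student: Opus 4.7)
The natural approach is a compactness and contradiction argument in the spirit of Caffarelli's improvement-of-flatness technique. Suppose the claim fails. Then one can produce sequences $\varepsilon_k \searrow 0$, operators $F_k$ uniformly $(\lambda_k, \Lambda_k)$-elliptic with $\Lambda_k/\lambda_k \le 1 + \varepsilon_k$ and $\Theta_{F_k}(x,t) \le \varepsilon_k$ on $Q_1$, source terms $f_k$ with $\|f_k\|_{L^\infty(Q_1)} \le \varepsilon_k$, and normalized viscosity solutions $u_k \in C(Q_1)$ of $\partial_t u_k - F_k(x,t, D^2 u_k) = f_k$, such that for every admissible polynomial $P$ one has $\sup_{Q_\rho} |u_k - P| > \rho^{2+\alpha}$ (the scale $\rho$ will be fixed only at the end of the argument).

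\textbf{Compactness and identification of the limit.} Since each $u_k$ belongs to the Pucci class $S(\lambda_k, \Lambda_k, f_k)$ with $\|u_k\|_{L^\infty}, \|f_k\|_{L^\infty} \le 1$, Krylov-Safonov's parabolic Hölder estimate yields a uniform $C^\beta$ modulus on compact subsets of $Q_1$. Arzelà-Ascoli then produces a subsequence converging locally uniformly to some $u_\infty \in C(Q_1)$ with $\|u_\infty\|_{L^\infty} \le 1$. Passing to a further subsequence, $\lambda_k, \Lambda_k \to \lambda$. Because $F_k(0,0,0)=0$, uniform ellipticity sandwiches $F_k(0,0,M)$ between $\mathcal{M}^-_{\lambda_k, \Lambda_k}(M)$ and $\mathcal{M}^+_{\lambda_k, \Lambda_k}(M)$, and both Pucci operators collapse to $\lambda\,\tr(M)$ as $\Lambda_k - \lambda_k \to 0$. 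Coupling this with the coefficient-oscillation bound $\Theta_{F_k} \le \varepsilon_k$, one obtains $F_k(x,t,M) \to \lambda\,\tr(M)$ locally uniformly on $Q_1 \times \mathrm{Sym}(d)$. The standard stability result for viscosity solutions, together with $f_k \to 0$ in $L^\infty$, then implies that $u_\infty$ solves $\partial_t u_\infty - \lambda\, \tr(D^2 u_\infty) = 0$ in $Q_1$ in the viscosity sense.

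\textbf{Polynomial approximation and contradiction.} The function $u_\infty$ is caloric (after a trivial time rescaling), so Lemma \ref{EstIntDer} produces universal bounds for $u_\infty$ and all of its derivatives on $Q_{4/5}$, in terms of $\|u_\infty\|_{L^\infty(Q_1)} \le 1$. Define the parabolic second-order Taylor polynomial at the origin,
\[
P_\infty(x,t) := u_\infty(0,0) + Du_\infty(0,0)\cdot x + \partial_t u_\infty(0,0)\, t + \tfrac{1}{2} x^T D^2 u_\infty(0,0)\, x,
\]
whose coefficients $A,B,C,D$ are bounded by a universal constant $M$. Evaluating the equation for $u_\infty$ at the origin gives $\partial_t P_\infty \equiv \lambda\,\tr(D^2 P_\infty)$, so $P_\infty$ itself solves the heat equation \eqref{eq_poly} on $Q_{4/5}$. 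Parabolic Taylor's theorem, together with the universal third-order control, furnishes a universal $C_\star > 0$ such that $\|u_\infty - P_\infty\|_{L^\infty(Q_\rho)} \le C_\star \rho^3$ for every small $\rho$. Fix $\rho>0$ small enough that $C_\star \rho^{1-\alpha} \le \tfrac{1}{2}$; then uniform convergence $u_k \to u_\infty$ on $\overline{Q_\rho}$ yields, for all $k$ sufficiently large,
\[
\sup_{Q_\rho}|u_k - P_\infty| \;\le\; \sup_{Q_\rho}|u_k - u_\infty| + \sup_{Q_\rho}|u_\infty - P_\infty| \;\le\; \tfrac{1}{2}\rho^{2+\alpha} + \tfrac{1}{2}\rho^{2+\alpha} \;=\; \rho^{2+\alpha},
\]
contradicting the defining property of the sequence.

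\textbf{Main obstacle.} The delicate point is the passage to the limit in the equation: one must verify that the combined smallness of the ellipticity aperture $\Lambda_k/\lambda_k - 1$ and of the oscillation $\Theta_{F_k}$ forces the nonlinear operators $F_k$ to collapse to the linear operator $\lambda\,\tr$ locally uniformly, so that viscosity stability applies and yields a caloric limit $u_\infty$. Once this is secured, the classical interior estimates for caloric functions deliver the approximating polynomial essentially for free, and the mismatch between cubic Taylor error and the target $(2+\alpha)$-order is absorbed by choosing $\rho$ small.
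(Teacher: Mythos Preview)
Your proof is correct and follows essentially the same compactness-and-contradiction scheme as the paper: extract a locally uniform limit $u_\infty$ via Krylov--Safonov, identify it as a solution of $\partial_t u_\infty - \lambda\,\tr(D^2 u_\infty)=0$ by collapsing the Pucci bounds as the ellipticity aperture vanishes, take the second-order Taylor polynomial of $u_\infty$ at the origin, and balance the cubic Taylor remainder against the target $\rho^{2+\alpha}$ by choosing $\rho$ small. The only noteworthy difference is that the paper inserts one extra correction step---defining $P_k := P - a_k t$ with $a_k = \partial_t P - F_k(0,0,D^2P) = o(1)$---so that the approximating polynomial actually satisfies $\partial_t P_k - F_k(0,0,D^2P_k)=0$ rather than merely the heat equation; this stronger conclusion is what is really used downstream in the iteration (Proposition~\ref{prop_interate1}, condition~\eqref{eq_condf}), even though the lemma as written only asks for~\eqref{eq_poly}, which your argument delivers directly.
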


\begin{proof}
We argue by contradiction. Suppose that the conclusion of the lemma is not true. Then, for all $\rho_0$ and sequences $(F_k)_{k\in \mathbb{N}}$, $(u_k)_{k\in \mathbb{N}}$, $(f_k)_{k\in \mathbb{N}}$, $(\Lambda_k)_{k\in \mathbb{N}}$ satisfying
\begin{equation}\label{eq_100}
   \|u_k\|_{L^\infty(Q_1)}\leq 1,
\end{equation}

\begin{equation}\label{eq_102}
\Theta_{F_k}(x, t) + \|f_k\|_{L^{\infty}(Q_{1})}  \leq \frac{1}{k}, \;\; \mbox{ and } \;\; \Lambda_k \leq \left(1+\frac{1}{k}\right)\lambda
\end{equation}

\begin{equation}\label{eq_103.5}
F_k \;\;\mbox{ is a uniformly } \; (\lambda, \Lambda_k)-\mbox{ elliptic operator},
\end{equation}
$u_k$ solves
\begin{equation}\label{eq_101}
\partial_t u_k-F_k(x, t, D^2u_k)= f_k(x, t) \;\; \mbox{ in } \; Q_1,
\end{equation}
in the viscosity sense, but
\begin{equation}\label{eq_103}
   \sup_{Q_{\rho_0}}|u_k-P| > \rho_0^{2+\alpha},
\end{equation}
for all quadratic polynomials $P$ that solve \eqref{eq_poly}.

From \eqref{eq_101}, we have that solutions $u_k$ are locally of class $C^{\beta, \beta/2}$, for some $\beta \in (0, 1)$, see for instance \cite[Section 5]{CKS00} and \cite[Section 4.4]{Wang92-II}. Hence, there exists a continuous function $u_\infty$ such that $u_k \to u_\infty$ (up to a subsequence) locally uniformly in $Q_1$. From \eqref{eq_102}, we have that $\Lambda_k \leq 2\lambda$ for all $k$ sufficiently large, which implies that the operators $F_k$ are $(\lambda, 2\lambda)$-elliptic for $k$ sufficiently large. Moreover, by using \eqref{eq_102}, we can conclude that $\Lambda \to \lambda$, up to a subsequence. Hence, $F_k \to \lambda\tr$ locally uniformly on ${Sym}(n)$ and by stability results (see \cite[Section 6]{CKS00} and \cite[Lemma 1.4]{Wang92-II}), we obtain

\begin{equation}\label{eq_104}
\partial_t u_\infty - \lambda\tr(D^2u_\infty) = 0 \;\;\mbox{ in }\; Q_{3/4}.
\end{equation}

Now, the classical regularity results available for \eqref{eq_104}, implies that $u_\infty$ is a smooth function and then we can define its Taylor polynomial
\[
P(x,t) = \dfrac{1}{2}x^TD^2u_\infty(0,0)x + \partial_t u_\infty(0,0)\cdot t + Du_\infty(0,0)\cdot x + u_\infty(0,0).
\]
In particular, from Lemma \ref{EstIntDer} we have for $\rho \in (0, 1/3)$,
\[
\sup_{Q_{\rho}}|u_\infty - P| \leq \mathrm{C}\rho^3,
\]
where $C$ is a universal constant. By using \eqref{eq_104} once more, we can infer that
\[
\partial_tP - \lambda\tr(D^2P) = 0 \;\; \mbox{ in } \; Q_{3/4},
\]
which implies
\[
\big|\partial_t P - F_k(0, 0, D^2P)\big| = o(1).
\]
Now, we set $a_k = \partial_t P - F_k(0, 0, D^2P)$ so that $|a_k| = o(1)$. In addition, we define the polynomial
\[
P_k := P - a_kt.
\]
Notice that $P_k$ solves
\[
\partial_t P_k - F_k(0,0,D^2P_k) = \partial_t P - F_k(0,0,D^2P)   - a_k = 0.
\]
Finally, combining the estimates above we conclude that for $k$ sufficiently large, and $\rho_0$ small
\begin{align*}
\sup_{Q_{\rho_0}}|u_k - P_k| & \leq \sup_{Q_{\rho_0}}|u_k - u_\infty| + \sup_{Q_{\rho_0}}|u_\infty - P| + \sup_{Q_{\rho_0}}|P - P_k| \\
& \leq \frac{1}{4}\rho_0^{2+\alpha} + \mathrm{C}\rho_0^3 + \frac{1}{4}\rho_0^{2+\alpha}.
\end{align*}
We conclude the prove by taking $\rho_0 \leq \left(\frac{1}{4C} \right)^{\frac{1}{1-\alpha}}$, so that
\[
\sup_{Q_{\rho_0}}|u_k - P_k| \leq \frac{3}{4}\rho_0^{2+\alpha},
\]
which is a contradiction with \eqref{eq_103}.
\end{proof}

\begin{proposition}\label{prop_interate1} Let $u \in C(Q_1)$ be a normalized viscosity solution to \eqref{eq_main}. Suppose that A\ref{Assump1}-A\ref{A3} are in force. If
\[
   \Theta_F(x, t) + \|f\|_{L^{\infty }(Q_{1})} \leq \varepsilon, \;\; \mbox{ and } \;\; \Lambda \leq (1+\varepsilon)\lambda,	\\\\\\
\]
then, there exists a sequence of second order polynomials $P_k$ of the form
\[
P_k(x,t) := \dfrac{1}{2}x^TD_kx + C_kt + B_k\cdot x + A_k,
\]
satisfying
\begin{equation}\label{eq_seq_inte}
\sup_{Q_{\rho^k}}|u(x,t) - P_k(x,t)| \leq \rho^{(2+\alpha)k},
\end{equation}
and
\begin{equation}\label{eq_condf}
C_k = F(0, 0, D_k)
\end{equation}
for every $k \geq 1$. Moreover,
\begin{equation}\label{eq_coeff_seq1}
|A_k-A_{k-1}| + \rho^{k-1}|B_k-B_{k-1}| + \rho^{k-1}(|C_k-C_{k-1}| + |D_k-D_{k-1}|) \leq C\rho^{(k-1)(2+\alpha)},
\end{equation}
where $C > 0$ is a universal constant and $\rho$ and $\varepsilon$ are as in Proposition \ref{lemma_app}.
\end{proposition}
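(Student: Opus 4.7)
I would argue by induction on $k$, using the trivial base $P_0 \equiv 0$: the coefficient bounds hold vacuously, the identity $C_0 = F(0,0, 0_{d\times d}) = 0$ follows from A\ref{A2}, and \eqref{eq_seq_inte} at $k=0$ reduces to the normalization $\|u\|_{L^\infty(Q_1)} \le 1$. The inductive step is the usual rescaling-plus-approximation device: given $P_k$, a single invocation of Lemma \ref{lemma_app} at unit scale will yield $P_{k+1}$ once the scaling is undone.

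For the induction step, given $P_k$, I would introduce
\[
v(x,t) \defeq \frac{(u-P_k)(\rho^{k}x,\rho^{2k}t)}{\rho^{(2+\alpha)k}}, \qquad (x,t)\in Q_1,
\]
which satisfies $\|v\|_{L^\infty(Q_1)} \le 1$ by \eqref{eq_seq_inte}. A short computation, relying crucially on \eqref{eq_condf} to absorb the linear-in-$t$ piece $C_k = F(0,0,D_k)$, shows that $v$ is a viscosity solution of
\[
\partial_t v - \widetilde F(x,t,D^2 v) = \widetilde f(x,t) \quad \text{in } Q_1,
\]
with $\widetilde F(x,t,M) \defeq \rho^{-\alpha k}\bigl[F(\rho^{k}x,\rho^{2k}t,D_k + \rho^{\alpha k} M) - C_k\bigr]$ and $\widetilde f(x,t) \defeq \rho^{-\alpha k} f(\rho^{k} x,\rho^{2k}t)$. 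One checks directly that $\widetilde F$ inherits the $(\lambda,\Lambda)$-ellipticity of $F$, and by \eqref{eq_condf} together with A\ref{A2} one has $\widetilde F(0,0,0)=0$ and $\widetilde f(0,0)=0$. Applying Lemma \ref{lemma_app} to $v$ then produces a quadratic polynomial $P'(x,t) = \tfrac12 x^T D' x + C' t + B'\cdot x + A'$ with $|A'|,|B'|,|C'|,|D'| \le M$, with $C' = \widetilde F(0,0,D')$, and with $\sup_{Q_\rho}|v-P'| \le \rho^{2+\alpha}$. Undoing the rescaling, I would set
\[
P_{k+1}(y,s) \defeq P_k(y,s) + \rho^{(2+\alpha)k}\,P'(\rho^{-k} y, \rho^{-2k} s),
\]
whose coefficients update as $A_{k+1}=A_k+\rho^{(2+\alpha)k}A'$, $B_{k+1}=B_k+\rho^{(1+\alpha)k}B'$, $C_{k+1}=C_k+\rho^{\alpha k}C'$, $D_{k+1}=D_k+\rho^{\alpha k}D'$. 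The change of variables $(y,s) = (\rho^{k}x, \rho^{2k}t)$ transports the estimate for $v-P'$ into $\sup_{Q_{\rho^{k+1}}}|u-P_{k+1}| \le \rho^{(k+1)(2+\alpha)}$, the coefficient updates directly yield \eqref{eq_coeff_seq1}, and the identity $C_{k+1} = F(0,0,D_{k+1})$ comes from $\rho^{\alpha k} C' = F(0,0,D_k+\rho^{\alpha k}D')-C_k$ combined with \eqref{eq_condf}.

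The main obstacle is verifying that the rescaled pair $(\widetilde F,\widetilde f)$ still meets the smallness hypotheses of Lemma \ref{lemma_app} uniformly in $k$. Using A\ref{A3} one obtains, for $(x,t)\in Q_1$,
\[
\Theta_{\widetilde F}(x,t) \le C_0 \bigl(\|D_k\|+2\bigr)(|x|+\sqrt{|t|})^{\alpha} \quad \text{and} \quad \|\widetilde f\|_{L^\infty(Q_1)} \le 2^\alpha\,[f]_{C^{\alpha,\alpha/2}(Q_1)},
\]
so the bound on $\Theta_{\widetilde F}$ depends linearly on $\|D_k\|$. I would handle this by noting that the update $|D_{k+1}-D_k|\le M\rho^{\alpha k}$ telescopes to a universal a priori bound $\|D_k\|\le M/(1-\rho^\alpha)$, and then choosing the smallness parameter $\varepsilon$ and the H\"older constant $C_0$ small enough (via an a priori rescaling of $u$, if necessary) to absorb this universal constant into the threshold of Lemma \ref{lemma_app}. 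The ellipticity aperture condition $\Lambda \le (1+\varepsilon)\lambda$ transfers to $\widetilde F$ unchanged, closing the induction.
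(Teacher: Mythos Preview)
Your proposal is correct and follows essentially the same induction-and-rescaling scheme as the paper: define the rescaled function $v$, identify the rescaled operator and source, invoke Lemma~\ref{lemma_app}, and unscale to produce $P_{k+1}$. You are in fact more careful than the paper on one point: the paper simply asserts that the rescaled operator $G$ ``satisfies A\ref{Assump1}--A\ref{A3}'' without commenting on the fact that $\Theta_{\widetilde F}$ picks up a factor of $\|D_k\|+1$, whereas you correctly observe this dependence, close it via the telescoping bound $\|D_k\|\le M/(1-\rho^\alpha)$, and absorb the resulting universal constant by an initial rescaling of the data---this is the standard fix and it works.
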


\begin{proof}
We argue by an induction argument. Set $P_{-1}=P_0=0$, then, the case $k=1$ follows trivially. Now, suppose that the result holds true for $k=1, \ldots, m$, and lets prove it for $k=m+1$. We introduce the auxiliary function $v : Q_1 \to \R$ defined by
\[
v(x) = \dfrac{u(\rho^mx, \rho^{2m}t) - P_m(\rho^mx, \rho^{2m}t)}{\rho^{(2+\alpha)m}}.
\]
Notice that $v$ solves
\[
\partial_t v - G(x, t, D^2v) = \tilde{f}(x,t) \;\;\mbox{ in }\; Q_1,
\]
where
\[
G(x,t, M) := \rho^{-m\alpha}\left(F(\rho^{m}x, \rho^{2m}t, \rho^{m\alpha}M + D_m) -C_m\right)
\]
and
\[
\tilde{f}(x, t) = \rho^{-m\alpha}f(\rho^mx, \rho^{2m}t),
\]
Now, observe that (by definition and \eqref{eq_condf}) $G$ satisfies $A\ref{Assump1}$-$A\ref{A3}$, and
\[
\|\tilde{f}\|_{L^\infty(Q_1)} \leq \rho^{-m\alpha}\|f\|_{L^\infty(Q_{\rho^m})} \leq \varepsilon,
\]
where in the last inequality we have used the H\"{o}lder continuity of $f$. Therefore, we can apply Proposition \ref{lemma_app} to $v$ and find a polynomial $\tilde{P}$ of the form
\[
\tilde{P}(x,t) := \dfrac{1}{2}x^T\tilde{D}x + \tilde{C}t + \tilde{B}\cdot x + \tilde{A},
\]
with $|\tilde{A}|, |\tilde{B}|, |\tilde{C}|, |\tilde{D}| \leq M$, such that
\[
\sup_{Q_\rho}|v(x,t) - \tilde{P}(x,t)| \leq \rho^{2+\alpha},
\]
and
\[
\partial_t\tilde{P}-G(0,0, D^2\tilde{P}) = 0 \;\; \mbox{ in } \; Q_{3/4}.
\]
Rescaling back to the function $u$, we get
\[
\sup_{Q_{\rho^{m+1}}}|u(x,t) - P_{m+1}(x,t)| \leq \rho^{(2+\alpha)(m+1)},
\]
where $P_{m+1}(x,t) := P_m(x, t) + \rho^{m(2+\alpha)}\tilde{P}(\rho^{-m}x, \rho^{-2m}t)$. From the definition of $P_{m+1}$ we have
\[
C_{m+1} = F(0,0, D_{m+1}),
\]
and
\[
|A_{m+1}-A_{m}| + \rho^{m}|B_{m+1}-B_{m}| + \rho^{m}(|C_{m+1}-C_{m}| + |D_{m+1}-D_{m}|) \leq C\rho^{m(2+\alpha)}.
\]
This finishes the proof.
\end{proof}

Now, the proof of Theorem \ref{theo:main1} follows from standards arguments, that we will include here for sake of completeness.

\begin{proof}[Proof the Theorem \ref{theo:main1} ]
First, notice that the coefficients of the polynomials from Proposition \ref{prop_interate1} form a Cauchy sequence. Hence, we can infer the existence of a polynomial $\bar{P}$ of the form
\[
\bar{P}(x,t) := \dfrac{1}{2}x^T\bar{D}x + \bar{C}t + \bar{B}\cdot x + \bar{A},
\]
so that $P_k \to \bar{P}$ uniformly in $Q_1$. From \eqref{eq_coeff_seq1}, we have the estimates
\[
\left\{
\begin{aligned}
|\bar{A} - A_k| \leq C\rho^{k(2+\alpha)}, \;&\; |\bar{B} - B_k| \leq C\rho^{k(1+\alpha)} \\
|\bar{C} - C_k| \leq C\rho^{k\alpha} \; \mbox{ and} \;&\; |\bar{D} - D_k| \leq C\rho^{k\alpha}
\end{aligned}
\right.
\]
Finally, given $0 <r \ll 1/2$, let $k \in \N$ be such that $\rho^{k+1} \leq r \leq \rho^{k}$. We estimate,
\begin{align*}
\sup_{Q_r}|u(x,t) - \bar{P}(x, t)| & \leq \sup_{Q_{\rho^k}}|u(x,t) - P_k(x, t)| + \sup_{Q_{\rho^k}}|P(x,t) - \bar{P}(x, t)| \\
& \leq \frac{\mathrm{C}}{\rho}\rho^{(k+1)(2+\alpha)} \\
& \leq \mathrm{C}r^{2+\alpha}.
\end{align*}
This finishes the proof  by using the equivalence of semi-norms from Lemma \ref{Lem02}.

\end{proof}

\section{Parabolic Log-Lipschitz type estimates: Proof of Theorem \ref{theo:LL}}\label{Sect. Log-Lip}

In this intermediate Section, we shall comment on the $C^{1, \text{Log-Lip}}$ parabolic interior estimate, which will be obtained by fine adjustments in the arguments carried out in Section \ref{sect_scha}. For this purpose, we now assume that the source term $f$ satisfies $A\ref{ALL}$, i.e, that $f \in  L^\infty(Q_1)$. Notice that, it is possible to find a solution $u$ to
\[
\partial_t u(x, t)- \tr(\mathfrak{A}(x,t)D^2u) = f(x, t) \;\; \mbox{ in } \;\; Q_1,
\]
where $\mathfrak{A}$ and $f$ are continuous, but neither $\partial_t u$ nor $D_{ij}u$ are bounded. We refer the reader to \cite{Il'in67} and \cite{Kru67}.

\begin{proof}[Proof of Theorem \ref{theo:LL}]
We revisit the proof of Proposition \ref{prop_interate1}. As before, under the standard smallness assumptions, we can apply Lemma \ref{lemma_app} and find a universal $0<\rho<1/2$ and a quadratic polynomial $P_1$ such that
\begin{equation}\label{eqEst1LL}
\sup_{Q_{\rho}} |u-P_1| \leq \rho^2.
\end{equation}
Again, it is enough to find a sequence of quadratic polynomials
\[
P_k(x, t) := \frac{1}{2}x^T\cdot D_k \cdot x + C_kt + B_k\cdot x + A_k,
\]
such that
\begin{equation}\label{LLhipind}
C_k = F(0, 0, D_k) \;\; \mbox{ and } \;\; \sup_{Q_{\rho^k}} |u-P_k| \leq \rho^{2k}.
\end{equation}

We resort to an induction argument on $k$. The first step of induction, $k = 1$, follows from \eqref{eqEst1LL}. Now, suppose that the induction hypotheses have been established for $k = 1,\ldots, n$. Let us prove the case $k=n+1$. By defining the re-scaled function $v := Q_1 \to \R$ as
\[
v_n(x,t) = \frac{u(\rho^n x, \rho^{2n}t)-P_n(\rho^n x, \rho^{2n}t)}{\rho^{2n}},
\]
we have, by the induction hypotheses, that $\|v_n\|_{L^{\infty}(Q_1)} \leq 1$ and it solves in the viscosity sense,
\[
\partial_t v_n - F_n(x, t, D^2v_n) = f(\rho^n x, \rho^{2n}t) := f_n(x, t),
\]
where $F_n(x, t, M) := F(\rho x, \rho^2 t, M + D_n) - C_n$. Now, since $v_n$, $F_n$ and $f_n$ also satisfies the hypotheses of Lemma \ref{lemma_app}, we obtain another quadratic polynomial $P$ satisfying
\begin{equation}\label{eq6.5}
\sup_{Q_{\rho}} |v_n - P| \leq \rho^2.
\end{equation}
Hence, rescaling back to $u$, we can infer
\begin{equation}\label{eq6.6}
\sup_{Q_{\rho^{n+1}}} \left|u(x,t) - \left[P_n(x,t)+ \rho^{2k}P\left(\frac{x}{\rho^n}, \frac{t}{\rho^{2n}}\right)\right]\right|\leq \rho^{2(n+1)}.
\end{equation}
Therefore, by defining
\[
P_{n+1}(x,t) := P_n(x,t) + \rho^{2n}P\left(\frac{x}{\rho^n}, \frac{t}{\rho^{2n}}\right),
\]
we check the $(n+1)^{{th}}$-step of induction and, the desired  conditions in \eqref{LLhipind} are satisfied. Moreover, we have the following approximation rate
\begin{equation}\label{eqLLcoeff}
|A_k-A_{k-1}| + \rho^{k-1}|B_k-B_{k-1}| + \rho^{2(k-1)}(|C_k-C_{k-1}| + |D_k-D_{k-1}|) \leq C\rho^{2(k-1)},
\end{equation}
for every $k \geq 1$, which implies
\begin{equation*}
|A_k-u(0, 0)| \leq C\rho^{2k} \;\; \mbox{ and } \;\;  |B_k- Du(0, 0)| \leq C\rho^{k};
\end{equation*}
as well as,
\begin{equation*}
|D_k| \leq Ck \;\; \mbox{ and } \;\;  |C_k| \leq Ck.
\end{equation*}

Finally, from the estimates above we are able to conclude that
\begin{equation}\label{eq_LL}
    \sup_{Q_r} \left | u(x, t) - \left [ u(0, 0) + D u(0, 0) \cdot x \right ] \right | \le \mathrm{C} r^2 \log r^{-1},
\end{equation}
for a constant $\mathrm{C}>0$ that depends only upon $d,\lambda$ and $\Lambda$. This concludes the proof of Theorem \ref{theo:LL}.
\end{proof}

\section{Estimates in Sobolev spaces}

In this section, we aim to detail the proof of Theorem \ref{theo:main3}. The proof follows standard arguments once we have Lemma \ref{lemma_app} available, and it consists in proving certain decay rates for the sets $A_M$. See for instance \cite{CastPim17} and \cite{ARS22}. Nevertheless, we will include the proof here on account of completeness. We start with a classic result, that gives us a first decay rate of the sets $A_M$.

\begin{proposition}[{\bf Estimate in $W_{loc}^{2, 1; \delta}(Q_1)$}] \label{prop_sob1}
Let $u\in C(Q_1)$ be a normalized viscosity solution to (\ref{eq_main}). Assume A\ref{Assump1}, A\ref{A2} and A\ref{A4} hold true. Then, there exist $\delta>0$ and a universal constant $C>0$ such that
\[
| A_M(u, Q_1)\cap K_1| \leq \mathrm{C} M^{-\delta}.
\]
\end{proposition}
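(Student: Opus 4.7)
The plan is to follow the classical Caffarelli-Wang $W^{2,\delta}$ strategy in the parabolic setting (cf.\ \cite{Wang92-II}, \cite{ImbSilv13}): show a single-scale density decay for $A_M$ using the approximation lemma, and then iterate it with the stacked covering Lemma \ref{lem_cov} to upgrade linear decay into power decay in $M$. First I would normalize: by multiplying $u$ by a small constant and dilating $Q_1$, I can arrange $\|u\|_{L^\infty(Q_1)} \leq 1$ together with $\Theta_F + \|f\|_{L^{p}(Q_1)} \leq \varepsilon$ where $\varepsilon$ is the smallness from Lemma \ref{lemma_app}; the integrability assumption A\ref{A4} with $p > d+1$ lets one upgrade $\|f\|_{L^p}$-smallness from H\"older's inequality in the usual way when scaling.

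Next, I would prove the base step: there exist universal constants $M_0 > 1$ and $\mu \in (0,1)$ such that, under the smallness of Lemma \ref{lemma_app},
\[
|G_{M_0}(u, Q_1) \cap K_1| \geq \mu |K_1|, \qquad \text{equivalently}\qquad |A_{M_0}(u, Q_1) \cap K_1| \leq (1-\mu)|K_1|.
\]
The idea is that Lemma \ref{lemma_app} supplies a quadratic polynomial $P$ with $|A|,|B|,|C|,|D|\leq M$ approximating $u$ to order $\rho^{2+\alpha}$ on $Q_\rho$; subtracting this paraboloid from $u$ and combining with the explicit control on $D^2 P$ lets one place an opening-$M_0$ paraboloid from above and below at every point of a subset of positive density inside $K_1$. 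This is essentially the parabolic analogue of the elliptic Lemma~7.5 of Caffarelli-Cabr\'e.

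Once the base step is in place, I would perform the standard inductive upgrade. By scaling invariance, the same conclusion holds on every parabolic dyadic sub-cube provided the rescaled solution still satisfies the hypotheses of Lemma \ref{lemma_app}. Applied iteratively, one shows the following measure-theoretic implication: for each $k \geq 1$, if $K \subset K_1$ is a dyadic cube with $|K \cap A_{M_0^{k+1}}(u,Q_1)| > (1-\mu)|K|$, then its stacked predecessor $\bar K^m$ (for a suitable universal $m$) is entirely contained in $A_{M_0^k}(u,Q_1)$. Assuming the contrary yields a rescaled solution on $\bar K^m$ that falls in the base regime, producing a point of $G_{M_0^{k+1}}$ in a set that was supposed to lie in $A_{M_0^{k+1}}$. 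Invoking Lemma \ref{lem_cov} with $A = A_{M_0^{k+1}}\cap K_1$ and $B = A_{M_0^k}\cap K_1$ and iterating yields
\[
|A_{M_0^k}(u,Q_1) \cap K_1| \leq (1-\tilde\mu)^k |K_1|
\]
for some universal $\tilde\mu \in (0,1)$. Given $M \geq 1$, pick $k$ with $M_0^k \leq M < M_0^{k+1}$; then $(1-\tilde\mu)^k \leq C M^{-\delta}$ with $\delta = -\log(1-\tilde\mu)/\log M_0 > 0$, which is the claim.

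The main obstacle, in my view, is the propagation/stacking step: verifying that the height-$m$ stacked parabolic cube (longer in time than in space, reflecting the scaling of the equation) falls into $A_{M_0^k}$ whenever its top slice has many bad points. One must carefully rescale not only space and time but also the quadratic polynomial being subtracted, keeping track of the constraint $C_k = F(0,0,D_k)$ from Proposition \ref{prop_interate1}-type reasoning so the rescaled operator still sits in the class of Lemma \ref{lemma_app}; this is where the small ellipticity aperture again enters essentially, since the approximating equation must remain close to a linear caloric one at every scale.
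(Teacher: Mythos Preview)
The paper does not prove Proposition \ref{prop_sob1} at all: it states the result and immediately refers to \cite{CabCaf2003} and \cite{Wang92-I}. This is the \emph{classical} parabolic $W^{2,1;\delta}$ estimate, valid for any $(\lambda,\Lambda)$-uniformly parabolic equation (indeed for the class $S(\lambda,\Lambda,f)$), with no smallness on the ellipticity aperture. The standard proof goes through the ABP--Krylov--Tso estimate and a barrier/touching-paraboloid argument, not through any approximation lemma.

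Your proposal, by contrast, builds the base step on Lemma \ref{lemma_app}, which requires $\Lambda \leq (1+\varepsilon)\lambda$. That hypothesis is \emph{not} part of Proposition \ref{prop_sob1}, so you are proving a strictly weaker statement. This matters: in the paper, Proposition \ref{prop_sob1} is invoked inside the proof of Proposition \ref{prop_sob2} for the auxiliary function $w = \frac{\delta}{2C\varepsilon}(u-h)$, which is only known to belong to $S(1,\Lambda,f)$ and does not solve any equation to which Lemma \ref{lemma_app} applies. If Proposition \ref{prop_sob1} itself relied on Lemma \ref{lemma_app}, the argument of Proposition \ref{prop_sob2} would collapse. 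In effect you have conflated the classical universal estimate (Proposition \ref{prop_sob1}) with the \emph{refined} decay obtained later in Propositions \ref{prop_sob2}--\ref{prop_sob4}; the latter is exactly where the approximation lemma and the stacked covering are combined with Proposition \ref{prop_sob1} as an input. A second, smaller issue: your base step claims that $L^\infty$-closeness to a polynomial on $Q_\rho$ yields touching paraboloids on a set of positive measure; this is not immediate and in the classical argument is obtained via an ABP/contact-set mechanism rather than by subtracting an approximant.
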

\noindent For a proof of Proposition \ref{prop_sob1} we refer the reader to \cite{CabCaf2003} in the elliptic case and \cite{Wang92-I} in the parabolic case.

From now on, our goal is to accelerate the decay rate of the sets $A_M$, by making use of Lemma \ref{lemma_app}. The first step in this direction is to combine Lemma \ref{lemma_app} and Proposition \ref{prop_sob1} to obtain a first level of refined decay rate. Let $Q$ be a parabolic domain such that $Q_{8\sqrt{d}} \subset Q$.

\begin{proposition}\label{prop_sob2}
Let $\rho \in (0,1)$ and $u\in C(Q)$ be a normalized viscosity solution to \eqref{eq_main} in $Q_{8{\sqrt{d}}}$ such that
\[
- |x|^2 - |t| \leq u(x,t) \leq |x|^2 + |t| \;\; \mbox{ in } \;\; Q \setminus Q_{6{\sqrt{d}}}.
\]
Assume that A\ref{Assump1}, A\ref{A2} and A\ref{A4} are in force and that
\[
\|f\|_{L^{d+1}(Q_{8\sqrt{d}})} \leq \varepsilon.
\]
Then, we can find $M > 1$ satisfying
\[
| G_{M}(u, Q) \cap K_1| \geq 1 - \rho.
\]
\end{proposition}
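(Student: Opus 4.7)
The proof follows the classical Caffarelli--Wang scheme for deriving measure-decay estimates that underlie parabolic Calder\'{o}n--Zygmund theory, adapted here to the small-ellipticity-aperture framework already developed in Section \ref{sect_scha}. The strategy is: approximate $u$ by a smooth solution of the heat equation, exploit its universal $C^{2,1}$-bound to touch it by paraboloids of fixed opening, and transfer that touching property to $u$ via a barrier-plus-ABP argument at the cost of a small measure loss.

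\textbf{Step 1 (caloric approximation).} By the same compactness/stability mechanism used in the proof of Lemma \ref{lemma_app}, upgraded to accommodate the $L^{d+1}$-smallness of $f$ in place of $L^\infty$-smallness, for any prescribed $\eta>0$ there exists $\varepsilon=\varepsilon(\eta,d,\lambda,\Lambda)>0$ such that, under the hypotheses of the proposition, one finds $h\in C(Q_{6\sqrt{d}})$ solving $\partial_t h-\lambda\tr(D^2 h)=0$ in $Q_{6\sqrt{d}}$ with $\|u-h\|_{L^\infty(Q_{6\sqrt{d}})}\le\eta$. Lemma \ref{EstIntDer} then supplies a universal $N_0$ with $\|h\|_{C^{2,1}(Q_{5\sqrt{d}})}\le N_0$, so at every $(x_0,t_0)\in K_1$ a paraboloid of opening $N_0$ touches $h$ both from above and from below on $Q_{5\sqrt{d}}$. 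Combining this interior control with the quadratic exterior growth inherited (up to $\eta$) by $h$ from the hypothesis $|u|\le|x|^2+|t|$ on $Q\setminus Q_{6\sqrt{d}}$, one obtains a universal $N_1\ge N_0$ so that the touching is global on $Q$.

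\textbf{Step 2 (ABP transfer to $u$).} Fix $(x_0,t_0)\in K_1$ and let $P^+$ be the paraboloid of opening $N_1$ touching $h$ from above at $(x_0,t_0)$. Setting $w:=u-P^+$, uniform ellipticity gives
\[
\partial_t w-\mathcal{M}^+_{\lambda,\Lambda}(D^2 w)\le f-\bigl(F(x,t,D^2 P^+)-F(x,t,0)\bigr)\quad\text{in }Q_{6\sqrt{d}},
\]
whose right-hand side has $L^{d+1}(Q_{6\sqrt{d}})$-norm bounded by a universal constant (from the $N_1$-bounded Hessian of $P^+$) plus $\varepsilon$. Since $w\le\eta$ on $Q_{6\sqrt{d}}$ and the exterior growth of $u$ is explicit, the parabolic ABP-Krylov-Tso maximum principle delivers
\[
|\bar{A}_M(u,Q)\cap K_1|\le C(\eta+\varepsilon)^\gamma,
\]
for $M:=2N_1$, where $C,\gamma>0$ are universal. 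A symmetric argument with the lower paraboloid $P^-$ yields the same bound for $|\underline{A}_M(u,Q)\cap K_1|$. Taking $\eta$ and $\varepsilon$ so small that $2C(\eta+\varepsilon)^\gamma\le\rho$, we conclude $|G_M(u,Q)\cap K_1|\ge|K_1|-\rho=1-\rho$.

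\textbf{Main obstacle.} The delicate point is Step 2: one must turn the qualitative information \emph{``$u$ is $\eta$-close to a function with universal $C^{2,1}$-norm''} into a quantitative pointwise paraboloid-touching condition for $u$ on a large portion of $K_1$. This hinges on designing a barrier that respects both the $\eta$-closeness and the prescribed exterior quadratic growth, and invoking the parabolic ABP estimate in such a way that the opening $M$ decouples from $\eta$ and $\varepsilon$, so that the measure loss vanishes as these parameters tend to zero. Step 1 is by contrast a routine adaptation of the compactness argument already carried out for Lemma \ref{lemma_app}, combined with the $L^{d+1}$-stability of $L^p$-viscosity solutions.
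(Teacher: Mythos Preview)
Your Step~1 is essentially the paper's. The gap is Step~2. The parabolic ABP--Krylov--Tso estimate is an $L^\infty$ bound on the supremum of a subsolution in terms of boundary data and $\|f\|_{L^{d+1}}$; it does not by itself deliver a measure estimate of the form $|\bar A_M(u,Q)\cap K_1|\le C(\eta+\varepsilon)^\gamma$. Two further problems compound this: (i) your auxiliary function $w=u-P^+$ depends on the base point $(x_0,t_0)$, so there is no single object to which a single estimate applies; (ii) as you yourself record, the right-hand side of the differential inequality for $w$ carries a \emph{universal} contribution of order $\Lambda N_1$ coming from $F(x,t,D^2P^+)-F(x,t,0)$, so even if ABP produced a measure bound it would not vanish as $\eta,\varepsilon\to 0$. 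Your closing paragraph correctly names the obstacle---decoupling the opening $M$ from $\eta,\varepsilon$---but the proposed mechanism does not overcome it.

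The paper resolves this by working with the \emph{single} difference $u-h$ rather than $u-P^+$. After extending $h$ so that $h=u$ outside $Q_{7\sqrt d}$, one has $\|u-h\|_{L^\infty(Q)}$ small and $u-h$ in the Pucci class with a source whose $L^{d+1}$-norm is small. The missing ingredient is then \emph{Proposition~\ref{prop_sob1}} (the universal $W^{2,1,\sigma}$ decay $|A_t\cap K_1|\lesssim t^{-\sigma}$), not raw ABP: applied to a normalization of $u-h$, it yields $|A_{N}(u-h,Q)\cap K_1|\le C\varepsilon^{\sigma}$, which is made $\le\rho$ by choosing $\varepsilon$ small. Since the interior $C^{2,1}$ bound on $h$ together with the exterior quadratic growth already gives $K_1\subset G_N(h,Q)$ for a universal $N$, the elementary inclusion $G_N(h,Q)\cap G_N(u-h,Q)\subset G_{2N}(u,Q)$ finishes the proof with $M=2N$. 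In short, Proposition~\ref{prop_sob1} is precisely the device that converts ``$u$ is uniformly close to a $C^{2,1}$ function'' into a measure estimate with arbitrarily small loss; a direct ABP argument on $u-P^+$ cannot substitute for it.
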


\begin{proof}
Let $h$ be the smooth function from Lemma \ref{lemma_app}. In particular we have $h\in {\mathcal C}^{1,1}_{loc}(Q_{8\sqrt{d}})$ and
\[
\|u-h\|_{L^{\infty}(Q_{6\sqrt{d}})} \leq \delta.
\]
Now, we extend $h$ continuously to $Q$, so that
\[
h = u \;\; \mbox{ in } \;\; Q\setminus Q_{7\sqrt{d}}
\]
with
\[
\|u-h\|_{L^\infty(Q)} = \|u-h\|_{L^\infty(Q_{6\sqrt{d}})}.
\]
From the maximum principle, we can infer
\[
\|u\|_{L^\infty(Q_{6\sqrt{d}})} = \|h\|_{L^\infty(Q_{6\sqrt{d}})}.
\]
Hence,
\[
\|u-h\|_{L^\infty(Q)} \leq 2,
\]
and
\[
-2-|x|^2 - |t| \leq h(x,t) \leq 2  + |x|^2 + |t| \;\; \mbox{ in } \;\; Q \setminus Q_{6\sqrt{d}}.
\]
It follows that, there exists $N > 1$ such that $Q_1\subset G_{N}(h, Q)$.

We define the auxiliary function $w: Q_{8\sqrt{d}} \to \R$ as
\[
 w := \frac{\delta}{2\mathrm{C} \varepsilon}(u - h).
 \]
Notice that $w \in S(1, \Lambda, f)$, thus, we can apply Proposition \ref{prop_sob1} to $w$ and conclude that
\[
|A_{{M}_1}(w, Q) \cap K_1| \leq \mathrm{C} {M}_1^{-\sigma},
\]
for every ${M}_1>0$, which implies
\[
|A_{{M}_2}(u-h , Q) \cap K_1| \leq \mathrm{C} {\varepsilon}^{\sigma}{M}_2^{\sigma},
\]
for every ${M}_2>0$. Therefore
\[
|G_{N}(u-h, Q) \cap K_1| \geq 1 - \mathrm{C}{\varepsilon}^{\sigma}{M}_2.
\]
By choosing $\varepsilon$ sufficiently small and ${M} \equiv 2N$, we conclude the proof.
\end{proof}

\begin{proposition}\label{prop_sob3}
Let $\rho \in (0,1)$ and $u\in C(Q)$ be a normalized viscosity solution to \eqref{eq_main} in $Q_{8\sqrt{d}}$. Assume A\ref{Assump1}, A\ref{A2} and A\ref{A4} are in force. In addition, suppose
\[
\|f\|_{L^{d+1}(Q_{8\sqrt{d}})} \leq \varepsilon,
\]
and $G_1(u,Q)\cap K_3\not=\emptyset.$ Then
\[
|G_{M}(u, Q) \cap K_1| \geq 1 - \rho,
\]
with ${M}$ as in Proposition \ref{prop_sob2}.
\end{proposition}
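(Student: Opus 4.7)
The approach is to reduce Proposition \ref{prop_sob3} to Proposition \ref{prop_sob2} by an affine translation and rescaling of $u$. Fix any $(\bar x, \bar t) \in G_1(u,Q) \cap K_3$. Membership of $(\bar x, \bar t)$ in $G_1(u, Q)$ produces two affine functions $L^\pm$ such that
\[
L^-(x,t) - (|x|^2 + |t|) \leq u(x,t) \leq L^+(x,t) + (|x|^2 + |t|) \quad \text{in } Q,
\]
with equality at $(\bar x, \bar t)$. Using $\|u\|_{L^\infty(Q_{8\sqrt{d}})} \leq 1$, $(\bar x, \bar t) \in K_3 \subset Q_{3\sqrt{d}}$, and testing the above inequalities at the points $(\bar x \pm e_i, \bar t)$ and $(\bar x, \bar t - 1)$ (each lying inside $Q_{8\sqrt{d}}$), a direct comparison yields universal bounds $|L^\pm(0,0)|, |\nabla_x L^\pm|, |\partial_t L^\pm| \leq C(d)$.

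Next I would introduce the auxiliary function
\[
v(x,t) \defeq \frac{u(x,t) - L^+(x,t)}{M_0},
\]
where $M_0 \geq 1$ is a universal constant to be chosen. Then $v$ solves, in the viscosity sense,
\[
\partial_t v - \tilde F(x,t, D^2 v) = \tilde f(x,t) \quad \text{in } Q_{8\sqrt{d}},
\]
with $\tilde F(x,t, M) \defeq F(x,t, M_0 M)/M_0$ and $\tilde f(x,t) \defeq (f(x,t) - \partial_t L^+)/M_0$. The operator $\tilde F$ remains uniformly $(\lambda, \Lambda)$-elliptic with $\tilde F(0,0,0) = 0$, and its oscillation satisfies $\Theta_{\tilde F} \leq \Theta_F$, so any smallness conditions on oscillation and ellipticity aperture are inherited. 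Moreover $\|\tilde f\|_{L^{d+1}(Q_{8\sqrt{d}})} \leq (\|f\|_{L^{d+1}} + C)/M_0$, which can be forced below the Proposition \ref{prop_sob2} threshold by taking $M_0$ large enough (universal) and then $\varepsilon$ sufficiently small, and similarly $\|v\|_{L^\infty(Q_{8\sqrt{d}})} \leq (1 + \|L^+\|_{L^\infty(Q_{8\sqrt{d}})})/M_0 \leq 1$.

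The decisive step is the paraboloid bound $|v| \leq |x|^2 + |t|$ on $Q \setminus Q_{6\sqrt{d}}$. The sandwich inequalities give
\[
\frac{L^-(x,t) - L^+(x,t)}{M_0} - \frac{|x|^2 + |t|}{M_0} \leq v(x,t) \leq \frac{|x|^2 + |t|}{M_0},
\]
and since $L^\pm$ have universally bounded coefficients, $|L^+(x,t) - L^-(x,t)| \leq C'(1 + |x| + |t|)$. On $Q \setminus Q_{6\sqrt{d}}$ one has either $|x| \geq 6\sqrt{d}$ or $|t| \geq 36 d$, and in both regimes one verifies $1 + |x| + |t| \leq C''(|x|^2 + |t|)$. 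Hence $|v| \leq (C'C'' + 1)(|x|^2 + |t|)/M_0 \leq |x|^2 + |t|$ as long as $M_0 \geq C'C'' + 1$. With all the hypotheses of Proposition \ref{prop_sob2} in place, that result applied to $v$ produces a universal $\tilde M = \tilde M(\rho)$ with $|G_{\tilde M}(v, Q) \cap K_1| \geq 1 - \rho$. Finally, because $u = M_0 v + L^+$ and $L^+$ is affine, any paraboloid of opening $\tilde M$ touching $v$ at a point lifts to a paraboloid of opening $M_0 \tilde M$ touching $u$ at the same point; thus $G_{\tilde M}(v, Q) \subset G_{M_0 \tilde M}(u, Q)$, and the conclusion follows with $M \defeq M_0 \tilde M$. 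The main technical obstacle is the coupled bookkeeping around $M_0$ and $\varepsilon$: $M_0$ must be large enough to simultaneously absorb the linear growth of $L^\pm$ into the quadratic bound and normalize $v$, while $\varepsilon$ still yields $\|\tilde f\|_{L^{d+1}}$ below the Proposition \ref{prop_sob2} smallness threshold — since every constant involved is universal, this is a careful quantitative balancing rather than a genuine new estimate.
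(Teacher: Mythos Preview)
Your proposal is correct and follows essentially the same route as the paper: pick a point in $G_1(u,Q)\cap K_3$, subtract the associated affine function, divide by a large universal constant so that the resulting $v$ is normalized and satisfies the exterior paraboloid bound, and then invoke Proposition~\ref{prop_sob2}. The paper's proof is terser (it uses a single affine $L$ and simply asserts the existence of the normalizing constant $\mathrm{C}$), whereas you spell out the bounds on the coefficients of $L^\pm$ and the verification of $|v|\le |x|^2+|t|$ on $Q\setminus Q_{6\sqrt{d}}$; the extra bookkeeping you flag around $M_0$ and $\varepsilon$ is exactly what the paper sweeps into the phrase ``$\mathrm{C}>1$ sufficiently large''.
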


\begin{proof}
Let $(x_1, t_1) \in G_1(u,Q)\cap K_3$. Then, by definition, we can find an affine function $L$ satisfying
\[
-\dfrac{|x - x_1|^2 + |t - t_1|}{2} \leq u(x,t) - L(x,t) \leq \dfrac{|x - x_1|^2 + |t - t_1|}{2} \;\; \mbox{ in } \; Q.
\]
Now, we introduce the auxiliary function
\[
v:= \dfrac{u-L}{\mathrm{C}},
\]
where $C>1$ is a sufficiently large constant so that $\|v\|_{L^{\infty}(Q_{8\sqrt{d}})} \leq 1$ and
\[
-|x|^2-|t| \leq v(x,t) \leq |x|^2 + |t| \;\; \mbox{ in } \; Q\setminus Q_{6\sqrt{d}}.
\]
We have that $v$ solves
\[
v_t - \dfrac{1}{\mathrm{C}}F(x, t, \mathrm{C}D^2u) = \dfrac{f}{\mathrm{C}},
\]
in the viscosity sense. By setting ${M}:=C\bar{{M}}$, Proposition \ref{prop_sob2} infers that
\[
|G_{M}(u,Q)\cap K_1| = |G_{C\bar{{M}}}(u,Q)\cap K_1| = |G_{\bar{{M}}}(v,Q)\cap K_1| \geq 1 - \rho.
\]
\end{proof}

In the next, we apply Lemma \ref{lem_cov}, which yields to an improved decay rate for the sets $A_M \cap K_1$.

\begin{proposition}\label{prop_sob4}
Let $\rho \in (0,1)$ and $u \in C(Q)$ be a normalized viscosity solution to \eqref{eq_main} in $Q_{8\sqrt{d}}$. Extend $f$ by zero outside of $Q_{8\sqrt{d}}$. Suppose A\ref{Assump1}, A\ref{A2} and A\ref{A4} are satisfied and set
\[
A := A_{{M}^{k+1}}(u,Q_{8\sqrt{d}})\cap K_1,
\]
and
\[
B := \left\{ A_{{M}^{k}}(u, Q_{8\sqrt{d}})\cap K_1\right\}\cup\left\{(x,t)\in K_1| m(f^{d+1})(x,t)\geq(C{M}^{k})^{d+1}\right\},
\]
where ${M}>1$ depends only on $d$ and $C >0$ is a universal constant and . Then,
\[
|A| \leq \rho|B|.
\]
\end{proposition}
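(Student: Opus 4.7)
My approach is to apply the Stacked Covering Lemma (Lemma \ref{lem_cov}) to the pair $(A,B)$, after possibly replacing $\rho$ by a slightly smaller constant $\rho':=\rho\,\tfrac{m}{m+1}$ to absorb the loss in the conclusion of that lemma. Condition (i) of Lemma \ref{lem_cov}, namely $|A|\leq \rho|K_1|$, is immediate from Proposition \ref{prop_sob3} together with the inclusion $A_{M^{k+1}}(u,Q_{8\sqrt{d}})\subset A_{M}(u,Q_{8\sqrt{d}})$ (a larger paraboloid opening makes the ``good'' set larger). The bulk of the argument is verifying the stacking condition (ii), which I will establish by contraposition.

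Assume, towards a contradiction, that there is a dyadic cube $K\subset K_1$ with $|K\cap A|>\rho|K|$ and a point $(x_0,t_0)\in \bar K^m\setminus B$. By definition of $B$, two pieces of information become available: first, $(x_0,t_0)\in G_{M^{k}}(u,Q_{8\sqrt{d}})$, so there is an affine function $L$ with
\[
|u(x,t)-L(x,t)|\leq \tfrac{M^k}{2}\bigl(|x-x_0|^2+|t-t_0|\bigr)\quad\text{in }Q_{8\sqrt{d}};
\]
second, $m(f^{d+1})(x_0,t_0)<(CM^k)^{d+1}$. Let $r$ be the parabolic scale of $K$ and $(\bar x,\bar t)$ the corresponding vertex. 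I would then rescale via
\[
v(y,s):=\frac{u(\bar x+ry,\bar t+r^2 s)-L(\bar x+ry,\bar t+r^2 s)}{r^2 M^k},
\]
which solves a new fully nonlinear parabolic equation $\partial_s v-\tilde F(y,s,D^2 v)=\tilde f(y,s)$ with the same ellipticity constants, where $\tilde F(y,s,N):=M^{-k}F(\bar x+ry,\bar t+r^2 s,M^k N)$ and $\tilde f(y,s):=M^{-k}f(\bar x+ry,\bar t+r^2 s)$.

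Choosing $m\in\mathbb{N}$ universal (depending only on $d$) so that the parabolic dilation takes the stacked cube $\bar K^m$ into the domain $Q_{8\sqrt{d}}$ of the rescaled variables, the paraboloid bound on $u-L$ yields $\|v\|_{L^\infty(Q_{8\sqrt{d}})}\leq 1$ and simultaneously produces a point in $G_1(v,\cdot)\cap K_3$, namely the image of $(x_0,t_0)$. The maximal-function hypothesis transfers, via parabolic change of variables, to
\[
\|\tilde f\|_{L^{d+1}(Q_{8\sqrt{d}})}^{d+1}\;\lesssim\; \frac{1}{M^{k(d+1)}\,r^{d+2}}\int_{Q'}|f|^{d+1}\;\lesssim\; C^{d+1},
\]
for a parabolic cube $Q'\ni (x_0,t_0)$ containing the rescaled domain; choosing the universal constant $C$ in the statement small enough forces $\|\tilde f\|_{L^{d+1}(Q_{8\sqrt{d}})}\leq \varepsilon$. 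All hypotheses of Proposition \ref{prop_sob3} now hold for $v$, yielding $|G_M(v,\cdot)\cap K_1|\geq 1-\rho$. Scaling back to $u$ turns this into $|A_{M^{k+1}}(u,\cdot)\cap K|\leq \rho|K|$, contradicting $|K\cap A|>\rho|K|$.

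The main obstacle will be calibrating the parabolic scales: $m$ must be chosen (depending only on $d$) so that $\bar K^m$ dilates inside $Q_{8\sqrt{d}}$, while at the same time the $(d+2)$-power Jacobian from the parabolic change of variables, when combined with the maximal-function control on $f^{d+1}$, produces precisely the required $L^{d+1}$-smallness constant $\varepsilon$. Once this matching of scales is pinned down, the rest is an essentially mechanical bookkeeping exercise using results already available in the paper.
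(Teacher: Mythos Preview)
Your proposal is correct and follows essentially the same route as the paper: apply the stacked covering lemma, and verify its second hypothesis by rescaling each offending dyadic cube to unit scale and invoking Proposition \ref{prop_sob3} to force the contradiction $|A_{M^{k+1}}(u,\cdot)\cap K|\le \rho|K|$. The only cosmetic differences are that you subtract the affine function $L$ explicitly in your rescaling (the paper defers this normalization to the interior of Proposition \ref{prop_sob3}), and for hypothesis (i) the paper invokes Proposition \ref{prop_sob2} directly---using $|u|\le 1\le |x|^2+|t|$ in $Q_{8\sqrt d}\setminus Q_{6\sqrt d}$---rather than Proposition \ref{prop_sob3}, whose extra hypothesis $G_1(u,Q)\cap K_3\neq\emptyset$ you have not verified.
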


\begin{proof}
Notice that
\[
|u(x,t)| \leq 1 \leq |x|^2 + |t| \;\; \mbox{ in } \; Q_{8\sqrt{d}}\setminus Q_{6\sqrt{d}}.
\]
Therefore from \ref{prop_sob2}, we get
\[
|G_{{M}^{k+1}}(u, Q_{8\sqrt{d}})\cap K_1| \geq 1 - \rho,
\]
or equivalently,
\[
|A| = |A_{{M}^{k+1}}(u, Q_{8\sqrt{d}})\cap K_1| \leq \rho|K_1|.
\]
Let $K := K_{1/2^i}$ be an arbitrary dyadic cube of $K_1$. We have
\begin{equation}\label{eq_hyp}
|A_{{M}^{k+1}}(u,Q_{8\sqrt{d}})\cap K|=|A\cap K| > \rho|K|.
\end{equation}
Now, in order to apply Lemma \ref{lem_cov}, it remains to show that $\bar{K}^m \subset B$, for some $m \in \mathbb{N}$. We argue by contradiction. Suppose that for all $m \in \mathbb{N}$ we have $\bar{K}^m \not\subset B$. Let $(x_1,t_1)$ be such that for any $m \in \mathbb{N}$,
\begin{equation}\label{eq_cont}
(x_1,t_1) \in \bar{K}^m\cap G_{{M}^k}(u, Q_{8\sqrt{d}})
\end{equation}
and
\begin{equation}\label{eq_max_con}
m(f^{d+1})(x_1,t_1) \leq (C_1{M}^k)^{d+1}.
\end{equation}
We introduce the auxiliary function
\[
v(x,t):= \frac{2^{2i}}{{M}^k}u\left(\frac{x}{2^i},\frac{t}{2^{2i}}\right).
\]
Observe that $Q_{8\sqrt{d}} \subset Q_{2^i\cdot8\sqrt{d}}$. Hence, $v$ solves in the viscosity sense
\[
\partial_t v - \frac{1}{{M}^k}F(x,t, {M}^kD^2v) = \tilde{f} \;\; \mbox{ in } \; Q_{8\sqrt{d}},
\]
where
\[
\tilde{f}(x,t) := \frac{1}{{M}^k}f\left(\frac{x}{2^i},\frac{t}{2^{2i}}\right).
\]
Now, we estimate the $L^{d+1}$-norm of $\tilde{f}$:
\[
\|\tilde{f}\|_{L^{d+1}(Q_{8\sqrt{d}})}^{d+1} = \frac{2^{i(d+2)}}{{M}^{k(d+1)}}\int_{Q_{8\sqrt{d}/2^i}}|f(x,t)|^{d+1}dxdt\leq c(d)C_1^{d+1},
\]
and by taking $C_1$ sufficiently small in \eqref{eq_max_con}, we obtain that $\tilde{f}$ satisfies
\[
\|\tilde{f}\|_{L^{d+1}(Q_{8\sqrt{d}})} \leq \varepsilon.
\]
Moreover, \eqref{eq_cont} implies
\[
G_1(v,Q_{8\sqrt{d}/2^i})\cap K_3 \neq\emptyset.
\]
Therefore, Proposition \ref{prop_sob3} yields to
\[
|G_{M}(v, Q_{2^i \cdot 8\sqrt{d}})\cap K_1| \geq (1-\rho),
\]
and rescaling back to $u$ we conclude
\[
|G_{{M}^{k+1}}(u, Q_{8\sqrt{d}})\cap K| \geq (1-\rho)|K|,
\]
which is a contradiction with \eqref{eq_hyp}. This finishes the proof.
\end{proof}

We now have gathered all we need to prove Theorem \ref{theo:main3}.

\begin{proof}[Proof of Theorem \ref{theo:main3}]
We start by defining the quantities
\[
\alpha_k := |A_{M^k}(u,Q_{8\sqrt{d}})\cap K_1|
\]
and
\[
\beta_k := |\{(x,t)\in K_1:m(f^{d+1})(x,t)\geq(C_1{M}^k)^{d+1}\}|.
\]
By applying Proposition \ref{prop_sob4}, we can infer that
\[
\alpha_{k+1} \leq \rho(\alpha_k + \beta_k),
\]
which implies
\begin{equation}\label{eq_sob10}
\alpha_k\leq\rho^k+	\displaystyle\sum_{i=1}^{k-1}\rho^{k-i}\beta_i.
\end{equation}

From the fact that $f\in L^p(Q_1)$, we obtain $m(f^{d+1})\in L^{p/(d+1)}(Q_1),$ with the estimate
\[
\|m(f^{d+1})\|_{L^{p/(d+1)}(Q_1)}\leq \mathrm{C}\|f\|^{d+1}_{L^p(Q_1)},
\]
for some positive constant $C$. Therefore, we get
\begin{equation}\label{eq_sob11}
\displaystyle\sum_{k=0}^{\infty}M^{pk}\beta_k\leq \mathrm{C}.
\end{equation}

Finally, by putting \eqref{eq_sob10} and \eqref{eq_sob11} together, and taking $\rho$ so that $\rho{M}^p \leq 1/2$, we conclude
\begin{align*}
\sum_{k=1}^{\infty}{M}^{pk}\alpha_k & \leq \sum_{k=1}^{\infty}(\rho {M}^p)^k + \sum_{k=1}^{\infty}\sum_{i=0}^{k-1}\rho^{k-i}{M}^{p(k-i)}\beta_i{M}^{pi} \\
& \leq \sum_{k=1}^{\infty}2^{-k}+\left(\sum_{i=0}^{\infty}{M}^{pi}\beta_i\right)\left(\sum_{j=1}^{\infty}(\rho {M}^p)^j\right)\\
& \leq \sum_{k=1}^{\infty}2^{-k} + \mathrm{C}\sum_{j=1}^{\infty}2^{-j}\\
&\leq \mathrm{C},
\end{align*}
and the result follows.
\end{proof}

\section{Some consequences and final comments}

In this section, we give some applications to our findings, comparing them with some results in the literature.

\subsection{$p-$BMO type estimates}

We start with $p$-BMO type estimates, $p \in (1, \infty)$, for $\partial_t u$ and $D^2 u$, for the solutions of \eqref{eq_main}. Notice that the final estimate in \eqref{eq_LL} indicates that solutions to \eqref{eq_main} exhibit asymptotic $C^{2, 1}$ behaviour in the parabolic context. Moreover, both $\partial_t u$ and $D^2 u$ display logarithmic tendencies near the origin. Hence, we can obtain $p$-BMO estimates for $\partial_t u$ and $D^2 u$ by adapting the previous proofs. We first give the definition of $p$-BMO norm.

\begin{definition}
We recall that a function $f\in L^{1}_{loc}(Q)$ is said of \textbf{$p$-bounded mean oscillation} in $Q$ for $p\in[1,\infty)$ or $f\in p-BMO(Q)$ if

\begin{equation*}\label{p-BMOnorm}
\|f\|_{p-BMO(\mathrm{Q})} := \sup_{(x_{0}, t_0)\in \mathrm{Q}, \rho>0} \left(\intav{\mathrm{Q}_{\rho}(x_0, t_0) \cap \mathrm{Q}} |f(x, t) - (f)_{(x_0,t_0), \rho}|^p dx\right)^{\frac{1}{p}} <\infty,
\end{equation*}
where for each $(x_{0}, t_0)\in \mathrm{Q}$ and $\rho>0$ we have that
\[
(f)_{(x_0,t_0), \rho} := \intav{\mathrm{Q}_{\rho}(x_0, t_0) \cap \mathrm{Q}} f(x, t) dxdt
\]
Moreover, for sake of simplicity, we denote $(f)_{\rho}$ when $(x_{0}, t_0)=(0, 0)$.
\end{definition}

With this definition in hand, we have the following result:

\begin{proposition}
Under the hypotheses of Theorem \ref{theo:LL}, we have
\[
\left\|\partial_t u\right\|_{p-BMO(Q_r)}+ \|D^2 u\|_{p-BMO(Q_r)} \leq \mathrm{C}\left(\|u\|_{L^{\infty}(Q_1)} + \|f\|_{L^{\infty}(Q_1)}\right),
\]
where $\mathrm{C} > 0$ only depends on $d$, $p$, $\lambda$ and $\Lambda$.
\end{proposition}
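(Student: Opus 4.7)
The plan is to combine the iterative polynomial-approximation scheme from the proof of Theorem \ref{theo:LL} with the Calder\'on-Zygmund estimate of Theorem \ref{theo:main3} so as to obtain, at every base point and every scale, an $L^p$ control of $\partial_t u$ and $D^2 u$ around a fixed constant; the $p$-BMO bound will then follow by the triangle inequality. Since $f \in L^\infty(Q_1) \subset L^q(Q_1)$ for every $q$, we may enlarge $p$ if needed so as to lie within the hypotheses of Theorem \ref{theo:main3}.

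First, I would rerun the induction from the proof of Theorem \ref{theo:LL} centered at an arbitrary $(x_0, t_0) \in Q_{1/2}$ (after a harmless translation making A\ref{A2} valid at $(x_0, t_0)$). This produces a universal $\rho \in (0, 1/2)$ and a sequence of quadratic polynomials $P_k^{(x_0, t_0)}$ with coefficients $A_k, B_k, C_k, D_k$ satisfying $C_k = F(x_0, t_0, D_k)$ and
\[
\sup_{Q_{\rho^k}(x_0, t_0)} \bigl|u - P_k^{(x_0, t_0)}\bigr| \leq \mathrm{C}_0\,\rho^{2k}, \qquad k \geq 1,
\]
with $\mathrm{C}_0$ depending only on $d$, $\lambda$, $\Lambda$ and the data $\|u\|_{L^\infty(Q_1)} + \|f\|_{L^\infty(Q_1)}$.

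Second, for each $k$ I would consider the rescaled function
\[
v_k(x, t) := \rho^{-2k}\bigl(u - P_k^{(x_0, t_0)}\bigr)\bigl(x_0 + \rho^k x,\, t_0 + \rho^{2k} t\bigr),
\]
which satisfies $\|v_k\|_{L^\infty(Q_1)} \leq \mathrm{C}_0$ and solves
\[
\partial_t v_k - \tilde F_k(x, t, D^2 v_k) = \tilde f_k(x, t) \quad \text{in } Q_1,
\]
where $\tilde F_k(x, t, M) := F(x_0 + \rho^k x, t_0 + \rho^{2k} t, M + D_k) - F(x_0, t_0, D_k)$ and $\tilde f_k(x,t) := f(x_0 + \rho^k x, t_0 + \rho^{2k} t)$. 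The operator $\tilde F_k$ inherits the $(\lambda, \Lambda)$-ellipticity (so the small-aperture hypothesis of Theorem \ref{theo:main3} is preserved), satisfies $\tilde F_k(0, 0, 0) = 0$, and $\|\tilde f_k\|_{L^\infty(Q_1)} \leq \|f\|_{L^\infty(Q_1)}$. Theorem \ref{theo:main3} then gives $\|v_k\|_{W^{2,1,p}(Q_{1/2})} \leq \mathrm{C}$, uniformly in $k$ and in $(x_0, t_0)$. Since $\partial_t v_k = \partial_t u(\cdot) - C_k$ and $D^2 v_k = D^2 u(\cdot) - D_k$ at the corresponding points, a change of variables yields
\[
\left(\intav_{Q_{\rho^k/2}(x_0, t_0)} |\partial_t u - C_k|^p\right)^{\!1/p} + \left(\intav_{Q_{\rho^k/2}(x_0, t_0)} |D^2 u - D_k|^p\right)^{\!1/p} \leq \mathrm{C}.
\]

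Third, to pass from these discrete radii $\rho^k/2$ to an arbitrary $r > 0$, I would pick $k$ with $\rho^{k+2} \leq r \leq \rho^{k+1}$, so that $Q_r(x_0, t_0) \subset Q_{\rho^k/2}(x_0, t_0)$ and the volume ratio is bounded universally. The $L^p$ control above then transfers to $Q_r(x_0, t_0)$ at the cost of a constant depending only on $\rho$, and the elementary inequality $\intav_{Q_r}|g - (g)_{Q_r}|^p \leq 2^p \intav_{Q_r}|g - c|^p$ (valid for any $c \in \mathbb{R}$, applied with $c = C_k$ or $c = D_k$ componentwise) closes the argument. The main obstacle I anticipate is verifying that the Log-Lipschitz iteration runs with base-point-independent constants for all $(x_0, t_0) \in Q_{1/2}$, that is, that A\ref{Assump1}, A\ref{A2} and A\ref{ALL} persist under the translation/rescaling and, crucially, that the small-aperture condition on $F$ is inherited by $\tilde F_k$. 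These points are essentially bookkeeping, but they must be handled carefully so that the final $p$-BMO constant does not depend on the base point.
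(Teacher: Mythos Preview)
Your proposal is correct and follows essentially the same approach as the paper: run the Log-Lipschitz iteration to produce quadratic approximants $P_k$, rescale so the error $v_k$ is bounded and solves an equation to which Theorem \ref{theo:main3} applies, and change variables back to bound the $L^p$-averages of $\partial_t u - C_k$ and $D^2 u - D_k$ on $Q_{\rho^k/2}$. Your additional bookkeeping (arbitrary base points, passage from discrete radii $\rho^k/2$ to arbitrary $r$, and the final triangle inequality) fills in details the paper leaves implicit.
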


\begin{proof}
We argue as in the proof of Theorem \ref{theo:LL}, under the corresponding smallness regime on $\Theta_F$ and $f  \in L^{\infty}(Q_1)$. As before, we can find a sequence of quadratic polynomials
\[
P_k(x, t) := \frac{1}{2}x^T\cdot D_k \cdot x + C_k.t+ B_k\cdot x+A_k,
\]
for which the auxiliary functions
\[
v_k(x,t) := \dfrac{u(\rho^kx, \rho^{2k}t)-P_k(\rho^kx, \rho^{2k}t)}{\rho^{2k}},
\]
are such that
\[
\|v_k\|_{L^\infty(Q_1)} \leq 1,
\]
and
\[
\partial_t v_k - F_k(x,t, D^2v_k) = f_k(x,t) \;\; \mbox{ in } \;\; Q_1.
\]
Now, we observe that, $F_k$ and $f_k$ satisfies assumptions $A\ref{Assump1}, A\ref{A2}$ and $A\ref{A4}$, and hence we can apply Theorem \ref{theo:main3} to conclude
\[
\left\|\partial_t v_k\right\|_{L^p\left(Q_{1/2}\right)}, \|D^2 v_k\|_{L^p\left(Q_{1/2}\right)} \leq \mathrm{C}.
\]
Hence,
\[
\intav{Q_{\rho^k/2}}\left|\partial_t(u-P_k)\right|^p + |D^2(u-P_k)|^p \leq \mathrm{C},
\]
for all $k \in \N$. Equivalently, we have
\[
\left\|\partial_t u\right\|_{p-BMO(Q_r)}+ \|D^2 u\|_{p-BMO(Q_r)} \leq \mathrm{C}(n, p, \lambda, \Lambda)\left(\|u\|_{L^{\infty}(Q_1)} + \|f\|_{L^{\infty}(Q_1)}\right),
\]
for all $r \ll 1$. This finishes the proof,
\end{proof}

\subsection{Estimates for a class of solutions in fully nonlinear models}

As a direct consequence of our findings, we recover and improve the recent results from Lee-Yu in \cite{LeeYu23}. In fact, let $u \in C(Q_1)$ be such that the following inequalities hold in the viscosity sense:
\begin{equation}\label{eq_class}
\partial_t u - \mathcal{M}^{+}_{\lambda,\Lambda}(D^2 u) -\|f\|_{L^{\infty}(Q_1)} \leq 0 \leq \partial_t u - \mathcal{M}^{-}_{\lambda,\Lambda}(D^2 u) +\|f\|_{L^{\infty}(Q_1)},
\end{equation}
where $f \in L^{\infty}(Q_1)$. Whenever a function $u \in C(Q_1)$ satisfies \eqref{eq_class}, we say that $u \in \mathcal{S}^{\ast}(\lambda, \Lambda, f)$. In this scenario, we have the following result:

\begin{theorem}[{\bf \cite[Theorem 1.1]{LeeYu23}}] Let $\alpha \in (0, 1)$ and $u \in C(Q_1)$ satisfying \eqref{eq_class}. There exists $\delta >0$ depending only on $d$ and $\alpha$ such that if $\frac{\Lambda}{\lambda}-1 \le \delta$, then $u \in C_{\text{loc}}^{1+\alpha, \frac{1+\alpha}{2}}(\mathrm{Q}_1)$ and there exist a positive constant $C = C(d, \lambda, \alpha)$ such that
\[
\|u\|_{C^{1+\alpha, \frac{1+\alpha}{2}}(Q_{1/2})} \leq \mathrm{C}\left(\|u\|_{L^{\infty}(Q_1)} + \|f\|_{L^{\infty}(Q_1)}\right).
\]
\end{theorem}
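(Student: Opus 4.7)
The plan is to mimic the two-scale argument from Sections \ref{sect_scha} and \ref{Sect. Log-Lip}, but downgraded from quadratic to affine approximation, since the solution class $\mathcal{S}^*(\lambda, \Lambda, f)$ lacks an explicit operator and the target is $C^{1+\alpha,(1+\alpha)/2}$ regularity. First I would prove the following analogue of Lemma \ref{lemma_app}: given $\alpha \in (0,1)$ there exist $\varepsilon > 0$ and $\rho \in (0,1/2)$, depending only on $d$, $\alpha$, and $\lambda$, such that whenever $u \in \mathcal{S}^*(\lambda, \Lambda, f)$ with $\|u\|_{L^\infty(Q_1)} \leq 1$, $\Lambda/\lambda - 1 \leq \varepsilon$, and $\|f\|_{L^\infty(Q_1)} \leq \varepsilon$, one can find an affine function $L(x,t) = A + B \cdot x$ with $|A|, |B|$ bounded by a universal constant $M$ satisfying $\sup_{Q_\rho} |u - L| \leq \rho^{1+\alpha}$.

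The proof of this approximation step is by contradiction and compactness. A failing sequence $(u_k, \Lambda_k, f_k)$ with $\Lambda_k \searrow \lambda$ and $\|f_k\|_{L^\infty} \to 0$ is locally equicontinuous by the Krylov--Safonov H\"older estimate for the class $\mathcal{S}^*$, so a subsequence converges locally uniformly to a limit $u_\infty$. Stability of viscosity sub/super-solutions, combined with $\mathcal{M}^\pm_{\lambda, \Lambda_k}(M) \to \lambda \tr(M)$ locally uniformly on $\mathrm{Sym}(d)$, forces both $\partial_t u_\infty - \lambda \Delta u_\infty \leq 0$ and $\partial_t u_\infty - \lambda \Delta u_\infty \geq 0$ in $Q_{3/4}$, so $u_\infty$ is caloric. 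The interior derivative bounds (Lemma \ref{EstIntDer}) applied to $u_\infty$ yield $\sup_{Q_\rho} |u_\infty - L| \leq \mathrm{C}\rho^2$ for $L(x) = u_\infty(0,0) + Du_\infty(0,0) \cdot x$, and choosing $\rho$ so that $\mathrm{C}\rho^{1-\alpha} \leq 1/2$ contradicts the standing assumption.

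Next I would iterate this basic step as in Proposition \ref{prop_interate1}. The rescaled function $\tilde u_k(x,t) \defeq \rho^{-k(1+\alpha)}\bigl[u(\rho^k x, \rho^{2k} t) - L_k(\rho^k x)\bigr]$ remains in $\mathcal{S}^*(\lambda, \Lambda, \tilde f_k)$ with $\|\tilde f_k\|_{L^\infty} \leq \rho^{k(1-\alpha)} \|f\|_{L^\infty} \leq \varepsilon$, because affine functions of $x$ alone have vanishing Hessian and vanishing time derivative, so subtracting $L_k$ preserves the Pucci bounds, and parabolic rescaling commutes with both extremal operators. Induction produces affine functions $L_k(x) = A_k + B_k \cdot x$ satisfying
\[
\sup_{Q_{\rho^k}} |u - L_k| \leq \rho^{k(1+\alpha)}, \qquad |A_{k+1} - A_k| + \rho^k |B_{k+1} - B_k| \leq \mathrm{C} \rho^{k(1+\alpha)}.
\]
Hence $\{A_k\}$ and $\{B_k\}$ are Cauchy, their limits identify with $u(0,0)$ and $Du(0,0)$, and the geometric decay translates into $\sup_{Q_r} |u - L_\infty| \leq \mathrm{C} r^{1+\alpha}$ for all $r \in (0, \rho)$. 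The bound at the origin transfers to every point in $Q_{1/2}$ via translation (the class $\mathcal{S}^*(\lambda, \Lambda, f)$ is translation invariant), and the parabolic analogue of Lemma \ref{Lem02} at the $C^{1+\alpha,(1+\alpha)/2}$ scale converts these pointwise bounds into the full H\"older norm estimate. The reduction to the normalized hypotheses is standard: rescale by $K \defeq \|u\|_{L^\infty(Q_1)} + \varepsilon^{-1}\|f\|_{L^\infty(Q_1)}$, which preserves membership in $\mathcal{S}^*$ and produces the linear dependence on the data.

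The main obstacle I anticipate is the stability step inside the approximation lemma: verifying carefully that a uniform limit of functions in $\mathcal{S}^*(\lambda, \Lambda_k, f_k)$ with $\Lambda_k \to \lambda$ and $f_k \to 0$ is genuinely caloric. The argument rests on the monotonicity of $\mathcal{M}^\pm_{\lambda, \Lambda}$ in $\Lambda$ and on the collapse $\mathcal{M}^+_{\lambda, \Lambda} = \mathcal{M}^-_{\lambda, \Lambda} = \lambda \tr$ when $\Lambda = \lambda$; these facts are routine, but because $\mathcal{S}^*$ is defined through two one-sided viscosity inequalities rather than a single equation, stability must be invoked separately for the sub- and super-solution parts, and one must rule out a ``gap'' between the two limiting inequalities. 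Once that point is secured, the remainder of the proof is a direct rewrite of the iteration scheme of Sections \ref{sect_scha} and \ref{Sect. Log-Lip} one differentiation order lower.
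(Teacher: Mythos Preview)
Your argument is correct and self-contained: an affine-approximation lemma for the class $\mathcal{S}^*$ obtained via Krylov--Safonov compactness and the collapse $\mathcal{M}^\pm_{\lambda,\Lambda_k}\to\lambda\,\tr$, followed by the standard geometric iteration one order below Sections~\ref{sect_scha}--\ref{Sect. Log-Lip}. The stability concern you flag is not an obstacle: since $\mathcal{S}^*$ is defined by two separate viscosity inequalities, one simply passes to the limit in each one; monotonicity of $\mathcal{M}^\pm$ in $\Lambda$ and the identity $\mathcal{M}^+_{\lambda,\lambda}=\mathcal{M}^-_{\lambda,\lambda}=\lambda\,\tr$ close the gap.

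The paper, however, does not prove this statement: it is quoted verbatim from \cite{LeeYu23}. The paragraph that follows the theorem is not a proof of the cited result but rather the remark that, for the \emph{narrower} subclass of viscosity solutions to an actual equation $\partial_t u - F(x,t,D^2u)=f$, Theorem~\ref{theo:LL} already yields the stronger parabolic $C^{1,\mathrm{Log\text{-}Lip}}$ estimate, from which $C^{1+\alpha,(1+\alpha)/2}$ for every $\alpha\in(0,1)$ follows trivially. That discussion does not reach the full class $\mathcal{S}^*(\lambda,\Lambda,f)$, because membership in $\mathcal{S}^*$ does not produce an operator $F$ for which $u$ solves a single equation. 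Your direct approach does cover the full class, and is in fact the natural adaptation of the paper's machinery to the solution-class setting; what it buys over the paper's remark is exactly the generality of the Lee--Yu statement, at the cost of re-running the approximation--iteration scheme instead of invoking Theorem~\ref{theo:LL} off the shelf.
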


Recall that if $u \in C(Q_1)$ is a viscosity solution to \eqref{eq_main} for some $(\lambda, \Lambda)$-elliptic fully nonlinear operator $F$, then $u \in \mathcal{S}(\lambda, \Lambda, f(x, t)+F(x, t, 0))$, i.e, $u$ satisfies
\[
\partial_t u - \mathcal{M}^{+}_{\lambda,\Lambda}(D^2 u)  \leq f(x, t) + F(x, t, 0) \leq \partial_t u - \mathcal{M}^{-}_{\lambda,\Lambda}(D^2 u) \;\; \mbox{ in } \;\; Q_1.
\]
Therefore, by applying Theorem \ref{theo:LL}, we obtain the following estimates
\[
\sup_{Q_r} \left | u(x, t) - \left [ u(0, 0) + D u(0, 0) \cdot x \right ] \right | \leq \mathrm{C} r^2 \log r^{-1}\left(\|u\|_{L^{\infty}(Q_1)} + \|f + F(x, t, 0)\|_{L^{\infty}(Q_1)} \right).
\]
In particular, given any $\alpha \in (0, 1)$, we have that $u$ is of class $C^{1+\alpha, \frac{1+\alpha}{2}}$ at $(0, 0)$ with the estimate
\[
\|u\|_{C^{1+\alpha, \frac{1+\alpha}{2}}(0, 0)} \leq \mathrm{C}\left(\|u\|_{L^{\infty}(Q_1)} + \|f + F(x, t, 0)\|_{L^{\infty}(Q_1)} \right)
\]
Finally, we must highlight that our estimates can be applied for a class of evolution PDEs driven by the Normalized $p$-Laplacian for $p \in (1, \infty)$, i.e.,
$$
\partial_t u(x, t) - \Delta^{\mathrm{N}}_p u(x, t) = f \in L^{\infty}(\mathrm{Q}_1) \quad \text{with} \quad \frac{\Lambda_p(\Delta^{\mathrm{N}}_p)}{\lambda_p(\Delta^{\mathrm{N}}_p)} = \frac{\min\{1, p-1\}}{\max\{1, p-1\}} \to 1 \quad \text{as} \quad p \to 2.
$$
Specifically, our estimates extend, in some extend, the ones addressed by Andrade-Santos in \cite{AndSant22} for $|p-2| = \text{o}(1)$.

\subsection{Estimates for a class of fully nonlinear Isaac operators}

Of particular interest, Theorem \ref{theo:main1} covers {\it Isaac's type equations}, which appear in Stochastic Control and in the Theory Differential Games:
 \begin{equation} \label{Isaac}
	\displaystyle \partial_t  u - F(x, t,  D^2u) = \partial_t  u - \sup_{\beta\in \mathcal{B}} \inf_{\gamma \in \mathcal{A}} (L_{\gamma \beta} u(x, t) - f_{\gamma \beta}(x, t)) = 0 \quad \text{in} \quad \mathrm{Q}_1,
\end{equation}
where $f_{\gamma \beta }\colon Q_1 \rightarrow \mathbb{R}$ are H\"{o}lder continuous and $L_{\gamma \beta}u = a_{\gamma \beta}^{ij}(x, t)\partial _{ij}u$ is a family of elliptic operators with H\"{o}lder continuous coefficients and ellipticity constants $\lambda$ and $\Lambda$ satisfying $\frac{\Lambda}{\lambda} -1 < \varepsilon_0$, where $\varepsilon_0$ is given by Theorem \ref{theo:main1}.

In the sequel, we present a regularity estimate for Isaac operators under a suitable smallness condition on the coefficients. Compare with \cite[Theorem 2.1]{ARS22}, \cite[Theorem 2.2]{ARS22} and \cite[Theorem 2.3]{ARS22}.

\begin{corollary}\label{Corolario1}
Let $u \in C(Q_1)$ be a viscosity solution to \eqref{Isaac}, with
\begin{equation}\label{eq:smallelip}
\frac{\Lambda}{\lambda} -1 < \varepsilon_0.
\end{equation}
Then
\begin{itemize}
\item[i)] If $f_{\gamma \beta} \in C^{\alpha, \frac{\alpha}{2}}(Q_1)$ and $a_{\gamma \beta}^{ij} \in C^{\alpha, \frac{\alpha}{2}}(Q_1)$, for $\alpha \in (0,1)$ , then $u \in C_{\text{loc}}^{2+\alpha,\frac{2+\alpha}{2}}(Q_1)$. Furthermore, an estimate like \eqref{EstSchauder} holds true.
\item[ii)] If $f_{\gamma\beta} \in L^\infty(Q_1)$, then $u \in C^{1, Log-Lip}_{loc}(Q_1)$.
\item[iii)] If $f_{\gamma\beta} \in L^p(Q_1)$, $d+1 <p < \infty $, then $u \in W^{2,1,p}_{loc}(Q_1)$.
\end{itemize}
\end{corollary}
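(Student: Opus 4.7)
The strategy is to rewrite \eqref{Isaac} in the normalized source–operator form required by the main theorems and then invoke them directly. Setting
\[
F(x,t,M) \defeq \sup_{\beta\in\mathcal{B}}\inf_{\gamma\in\mathcal{A}}\bigl(a_{\gamma\beta}^{ij}(x,t)M_{ij} - f_{\gamma\beta}(x,t)\bigr),
\]
I would split off the inhomogeneity by defining $G(x,t,M) \defeq F(x,t,M) - F(x,t,0)$ and $h(x,t) \defeq F(x,t,0)$, so that $G(x,t,0)\equiv 0$ and \eqref{Isaac} reads $\partial_t u - G(x,t,D^2u) = h(x,t)$. A further subtraction of the constants $G(0,0,0)$ and $h(0,0)$, compensated by an affine-in-$t$ translation of $u$, puts the problem in a form for which the reducibility condition A\ref{A2} holds without loss of generality.

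The next step is to verify that the remaining hypotheses are inherited by the reformulated problem. Uniform $(\lambda,\Lambda)$-ellipticity passes through sup–inf, so $F$, and hence $G$ (which differs from $F$ only by an $M$-independent term), is $(\lambda,\Lambda)$-elliptic with the same constants, and the aperture $\Lambda/\lambda-1$ is unchanged; in particular the smallness \eqref{eq:smallelip} transfers with the same $\varepsilon_0$, giving A\ref{Assump1}. For A\ref{A3} in case (i), the standard sup–inf inequality
\[
|F(x,t,M) - F(y,s,M)| \leq \sup_{\gamma,\beta}\|a_{\gamma\beta}(x,t)-a_{\gamma\beta}(y,s)\|\,\|M\| + \sup_{\gamma,\beta}|f_{\gamma\beta}(x,t)-f_{\gamma\beta}(y,s)|,
\]
combined with the uniform (in $\gamma,\beta$) $C^{\alpha,\alpha/2}$-regularity of the coefficients and sources, yields $\Theta_G(x_0,t_0,x,t) \leq C\,\mathrm{dist}((x,t),(x_0,t_0))^{\alpha}$; taking $M=0$ in the same inequality controls the parabolic H\"older seminorm of $h$. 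For cases (ii) and (iii), the bound $|h(x,t)|\leq\sup_{\gamma,\beta}|f_{\gamma\beta}(x,t)|$ transfers the $L^{\infty}$ or $L^p$ control of the family to $h$, yielding A\ref{ALL} and A\ref{A4} respectively.

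With A\ref{Assump1}, A\ref{A2}, and the appropriate source-regularity assumption in place, parts (i), (ii), (iii) of the corollary follow from direct application of Theorems \ref{theo:main1}, \ref{theo:LL}, and \ref{theo:main3} to the reformulated equation. The one delicate point, on which I would dwell most carefully, is the uniformity in $(\gamma,\beta)$ of the H\"older, $L^\infty$, or $L^p$ bounds on the family $\{a_{\gamma\beta}^{ij},\,f_{\gamma\beta}\}$: without such uniformity the sup–inf inequality above would not close into a useful oscillation estimate for $F$. This uniformity is implicit in the statement that the entire family lies in the prescribed regularity class, and is precisely what permits the small-ellipticity-aperture theory of Sections \ref{sect_scha}, \ref{Sect. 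Log-Lip}, and the Sobolev section to be inherited at the same threshold $\varepsilon_0$.
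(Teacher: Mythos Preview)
Your proposal is correct and follows precisely the route the paper intends: the corollary is stated without an explicit proof, being understood as an immediate application of Theorems \ref{theo:main1}, \ref{theo:LL}, and \ref{theo:main3} once one checks that the Isaac operator inherits uniform $(\lambda,\Lambda)$-ellipticity and the relevant oscillation/integrability hypotheses through the sup--inf. Your verification of A\ref{Assump1}--A\ref{A4} via the sup--inf inequality, together with the observation that the aperture $\Lambda/\lambda-1$ is unchanged, is exactly what is needed; note only that $G(0,0,0)=0$ automatically by your construction, so only the subtraction of $h(0,0)$ is required for A\ref{A2}.
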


\begin{remark} Note that under the smallness condition \eqref{eq:smallelip}, we have that there exists $\varepsilon^{\prime}_0 = \varepsilon_0^{\prime}(\varepsilon_0)\ll 1$ such that
\[
\left\|\frac{a_{\gamma \beta}^{ij}}{\lambda}-\delta_{ij}\right\|_{L^{\infty}(Q_1)} <\varepsilon_0^{\prime},
\]
which resembles a type of Cordes-Niremberg condition. In fact, we have
\[
L_{\gamma \beta}u(x, t) = \sum_{i, j=1}^{d} a_{\gamma \beta}^{ij }(x, t)\partial _{ij}u(x, t)
\]
for a $(\lambda, \Lambda)$-uniformly elliptic and symmetric matrix $A(x, t) = (a_{\gamma \beta}^{ij }(x, t))_{i,j=1}^d$, then
\[
L_{\gamma \beta}u(x, t) = \Delta u(x, t) +  \sum_{i, j=1}^{d} (a_{\gamma \beta}^{ij }(x, t)- \delta_{ij})\partial _{ij}u(x, t).
\]

Now, let $\{\xi_1, \xi_2, \cdots, \xi_d\}$ be unit vectors in the canonical basis of $\mathbb{R}^d$. For $(x_0, t_0) \in Q_1$, from the uniform ellipticity of ${A}$ we obtain
\begin{equation}\label{EqElipUnif}
\lambda|\xi|^2\le \langle a_{\gamma \beta}^{ij}(x_0, t_0)\xi, \xi\rangle \le \Lambda|\xi|^2 \quad \forall\,\,\xi \in \mathbb{R}^n.
\end{equation}

By choosing $\xi = \xi_k$ ($1 \le k \le d$) in \eqref{EqElipUnif}, we get $\lambda \le a_{\gamma \beta}^{kk}(x_0, t_0)\le \Lambda$. Hence, we can conclude
\begin{equation}\label{Eq1Cor1}
  \left|\frac{a_{\gamma \beta}^{kk}(x_0, t_0)}{\lambda}-1\right|\le \frac{\Lambda}{\lambda}-1< \varepsilon_0.
\end{equation}

On the other hand, by taking $\xi = \xi_i + \xi_j$ in \eqref{EqElipUnif} we obtain,
\[
2\lambda \leq a_{\gamma \beta}^{ii}(x_0, t_0) + a_{\gamma \beta}^{jj}(x_0, t_0) + a_{\gamma \beta}^{ij}(x_0, t_0) + a_{\gamma \beta}^{ji}(x_0, t_0) \le 2\Lambda,
\]
which implies
\[
0\le \left(\frac{a_{\gamma \beta}^{ii}(x_0, t_0)}{\lambda}-1\right) + \left(\frac{a_{\gamma \beta}^{jj}(x_0, t_0)}{\lambda}-1\right) + 2\frac{a_{\gamma \beta}^{ij}(x_0, t_0)}{\lambda}\le 2\left(\frac{\Lambda}{\lambda}-1\right).
\]
The triangular inequality yields to
\begin{equation}\label{Eq2Cor1}
\begin{array}{rcl}
 \displaystyle \left|\frac{a_{\gamma \beta}^{ij}(x_0, t_0)}{\lambda}-0\right| & \le & \displaystyle \frac{1}{2}\left[2\left(\frac{\Lambda}{\lambda}-1\right) + \left|\frac{a_{\gamma \beta}^{ii}(x_0, t_0)}{\lambda}-1\right| + \left|\frac{a_{\gamma \beta}^{jj}(x_0, t_0)}{\lambda}-1\right|\right] \\
   & < & 2\varepsilon_0.
\end{array}
\end{equation}

Therefore, from \eqref{Eq1Cor1} and \eqref{Eq2Cor1} we conclude that
\[
\frac{\Lambda}{\lambda}-1 <\varepsilon_0 \quad \Rightarrow \quad \left\|\frac{a_{\gamma \beta}^{ij}}{\lambda}-\delta_{ij}\right\|_{L^{\infty}(Q_1)} < 2\varepsilon_0,
\]
i.e. $\varepsilon_0^{\prime} = 2\varepsilon_0$.
\end{remark}

\subsection{Final comments}

Returning to one of the key questions explored in this work, namely, the quest for a classical solution, it is also available in the literature a partial regularity result. Before presenting this result, let's revisit some of its foundational concepts. We begin by introducing the notion of the ``parabolic Hausdorff dimension" for a set $\Omega \subseteq \R^{d+1}$:
\[
\mathcal{H}_{par}(\Omega) := \inf\left\{0 \leq s < \infty: \forall \,\gamma > 0 \,\,\exists \, \{Q_{r_j}(x_j, t_j)\}_{j \geq 1} \,s.t. \,\, \Omega \subseteq \bigcup_{j \geq 1} Q_{r_j}(x_j, t_j) \, \mbox{ and } \, \sum_{j \geq 1} r^s_j < \gamma \right\}
\]
Additionally, the relationship between the parabolic Hausdorff measure and the standard Hausdorff one is given by
\[
2\mathcal{H}(\Omega)-d \leq \mathcal{H}_{par}(\Omega) \leq \mathcal{H}(\Omega)+1.
\]
Under the notations above, we have the following result:

\begin{theorem}[{\bf \cite{D} and \cite[Theorem 4]{DasdosP19}}]\label{partial reg} Let $u \in C(Q_1)$ be a viscosity solution to
\[
   \partial_t u  - F(D^2u) = f(x, t) \;\; \mbox{ in } \;\; Q_1,
\]
where $F \in C^1(\text{Sym}(d))$ satisfies $\mathfrak{c}  \le D_{ij} F(M) \leq \mathfrak{c}^{-1}$ for some constant $\mathfrak{c}>0$, and $f$ is a Lipschitz continuous function. Then, there exist $\varepsilon >0$, depending only on universal parameters, and a closed set $\Gamma_{\mbox{Sing}} \subset Q_1$, with $\mathcal{H}_{\textit{par}}(\Gamma_{\text{Sing}}) \leq d+2-\varepsilon$, such that $u \in C^{2+\alpha, \frac{2+\alpha}{2}}(Q_1 \setminus \Gamma_{\mbox{Sing}})$ for all $\alpha \in (0, 1)$.
\end{theorem}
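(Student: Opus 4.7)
The plan is to adapt the Armstrong--Silvestre--Smart elliptic partial regularity strategy to the parabolic setting, exploiting the $C^1$ smoothness of $F$ to linearize at any fixed Hessian. I would first define the singular set quantitatively: call $(x_0,t_0)\in Q_1$ a \emph{regular point} if there exists a scale $r>0$, a matrix $M_0\in\mathrm{Sym}(d)$, and a quadratic polynomial $P$ with $D^2P=M_0$ such that
\[
\sup_{Q_r(x_0,t_0)} |u-P| \le \delta_0\, r^2,
\]
where $\delta_0>0$ is a universal threshold to be fixed later. Let $\Gamma_{\text{Sing}}$ be the complement of the set of regular points, which is closed by a standard semicontinuity argument. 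The goal decomposes into (a) showing $u\in C^{2+\alpha,\frac{2+\alpha}{2}}$ at every regular point, and (b) bounding the parabolic Hausdorff measure of $\Gamma_{\text{Sing}}$.

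For (a), I would run an improvement-of-flatness iteration. If $(x_0,t_0)$ is $\delta_0$-flat with respect to $P$ at scale $r$, rescale via $v(y,s) = \frac{u(x_0+ry,t_0+r^2s)-P(x_0+ry,t_0+r^2s)}{\delta_0 r^2}$. Then $v$ satisfies, in the viscosity sense,
\[
\partial_t v - \tilde F(D^2v) = \tilde f \quad\text{in }Q_1,
\]
where $\tilde F(N) := \delta_0^{-1}\bigl(F(M_0+\delta_0 N) - F(M_0)\bigr)$. Because $F\in C^1$ with $\mathfrak{c}\le D_{ij}F\le \mathfrak{c}^{-1}$, Taylor expansion around $M_0$ gives $\tilde F(N)=\sum_{ij}D_{ij}F(M_0)N_{ij}+o_{\delta_0}(1)\|N\|$, i.e., $\tilde F$ is a small perturbation of a constant-coefficient linear operator $L_0$. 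A compactness/stability argument (exactly as in Lemma~\ref{lemma_app}, but with the limiting equation $\partial_t w - L_0(D^2 w)=0$) together with the classical $C^{2+\alpha}$ regularity of $L_0$-caloric functions yields a new quadratic polynomial $P'$ with
\[
\sup_{Q_\rho} |v-P'| \le \rho^{2+\alpha},
\]
for universal $\rho\in(0,1/2)$. Rescaling back and iterating as in Proposition~\ref{prop_interate1} gives $u\in C^{2+\alpha,\frac{2+\alpha}{2}}$ at $(x_0,t_0)$ for every $\alpha\in(0,1)$, with quantitative bounds.

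For (b), which I expect to be the main obstacle, the plan is to derive a quantitative decay estimate on the set of ``non-flat'' points and turn it into a Hausdorff-dimension bound via a dyadic covering. Concretely, for $M\gg 1$ define
\[
\mathcal{N}_M := \bigl\{(x,t)\in Q_{1/2}:\ (x,t)\text{ is not }(\delta_0,r)\text{-flat for any } r\ge M^{-1/2}\bigr\}.
\]
Using the $W^{2,1,\delta}$ estimate of Proposition~\ref{prop_sob1} (which holds in our setting since the equation enjoys $C^{1,1}$ interior estimates on a large set via the convex/linearized comparison) together with the stacked covering Lemma~\ref{lem_cov}, one obtains a geometric decay $|\mathcal{N}_M|\le C M^{-\delta}$ for some universal $\delta>0$. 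Since $\Gamma_{\text{Sing}}\subset \bigcap_M \mathcal{N}_M$, covering $\mathcal{N}_M$ by parabolic cylinders of radius $M^{-1/2}$ and optimizing gives the bound $\mathcal{H}_{\mathrm{par}}(\Gamma_{\text{Sing}})\le d+2-\varepsilon$ with $\varepsilon=\varepsilon(d,\mathfrak{c},\mathrm{Lip}(f))>0$. The delicate point is obtaining the decay $|\mathcal{N}_M|\le CM^{-\delta}$ in the non-convex regime: this requires combining the $C^1$-linearization with a Calder\'on--Zygmund type cube decomposition, propagating pointwise $C^{1,1}$ information to the complement of $\mathcal{N}_M$ along the lines of Savin's small-perturbation theory adapted to parabolic scaling.
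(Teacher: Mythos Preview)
The paper does not prove this theorem: it is quoted verbatim from \cite{D} and \cite[Theorem 4]{DasdosP19} in the final-comments subsection and serves only to motivate a conjecture on how $\varepsilon$ depends on $\Lambda/\lambda$. There is therefore no in-paper argument to compare your proposal against.

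That said, your overall strategy---transport Armstrong--Silvestre--Smart to the parabolic setting via $C^1$-linearization plus an improvement-of-flatness iteration---is precisely what the cited references carry out, and your part (a) is correct as sketched. The gap is in part (b). The decay $|\mathcal{N}_M|\le CM^{-\delta}$ cannot come from Proposition~\ref{prop_sob1} applied to $u$ itself: that proposition bounds the measure of the set where $u$ fails to be touched by an opening-$M$ paraboloid, which is second-order information, whereas membership in your $\mathcal{N}_M$ records failure of \emph{quadratic} approximation on a whole cylinder at every small scale---a third-order (oscillation-of-$D^2u$) condition. Knowing $(x,t)\in G_M(u,Q)$ says nothing about how close $u$ is to a \emph{fixed} paraboloid on $Q_r(x,t)$. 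The actual mechanism in \cite{D}---flagged by the paper itself as ``a parabolic $W^{3,\varepsilon}$ estimate''---is to use $F\in C^1$ so that spatial difference quotients $\Delta_h^e u$ lie in a Pucci class with ellipticity $(\mathfrak{c},\mathfrak{c}^{-1})$, and then apply the $W^{2,1,\delta}$ estimate \emph{to those difference quotients}, uniformly in $h$. This yields $D^3u\in L^{\varepsilon}_{\mathrm{weak}}$, and a Vitali-type covering by parabolic cylinders converts that into $\mathcal{H}_{\mathrm{par}}(\Gamma_{\text{Sing}})\le d+2-\varepsilon$. Your closing appeal to Savin's small-perturbation theory is misplaced for this step: no smallness is available on $\Gamma_{\text{Sing}}$ by definition, and the dimension bound is a purely measure-theoretic consequence of the universal $W^{3,\varepsilon}$ estimate, not of any flatness improvement.
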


The previous result raise the following question: considering viscosity solutions to \eqref{eq_main}, what can we say about the constant $\varepsilon$ that appear in Theorem \ref{partial reg}? Notice that under the hypotheses of Theorem \ref{theo:main1}, we also show that $\mathcal{H}_{\textit{par}}(\Gamma_{\text{Sing}}) = 0$. Hence, our findings hints that the quantity $\varepsilon$ has some relation with $\mathfrak{e} :=  \frac{\Lambda}{\lambda} - 1$. In the elliptic framework, this was confirmed by the work of Armstrong-Silvestre-Smart in \cite{ASS12}, and more recently in the work by Nascimento-Teixeira \cite{NasTei23} (just to cite a few), where they proved the estimate
\[
 \frac{\left(1+\frac{2}{3}\left(1-\frac{\lambda}{\Lambda}\right)^{d-1}\right)}{\ln(d^4)}.\left(\frac{\lambda}{\Lambda}\right)^{d-1} \leq  \varepsilon \leq \dfrac{d\lambda}{(d-1)\Lambda + \lambda} = \dfrac{d}{(d-1)\frac{\Lambda}{\lambda}+1} \quad \text{for} \quad d\ge 3.
\]
We highlight that such a constant $\varepsilon$ in Theorem  \ref{partial reg} depends only on the dimension and ellipticity parameters of $F$. Moreover, it also appears in the Daniel's work \cite[Theorem 1.2]{D}, where he addressed a parabolic $W^{3, \varepsilon}$ estimate.

Therefore, in this direction, for our class of operators, and $d \geq 2$, it is reasonable (and we conjecture) to expect that.
\[
\mathcal{H}(\Gamma_{\text{Sing}}) \le (d+1)\left(1 - \text{c}\left(d, \frac{\Lambda}{\lambda}\right)\right),
\]
where $\text{c}\left(d, \frac{\Lambda}{\lambda}\right) \approx 1$ as $\frac{\Lambda}{\lambda} \approx 1$. Furthermore, we conjecture $\text{c}\left(d, \frac{\Lambda}{\lambda}\right) \lesssim \dfrac{d+1}{d\left(\frac{\Lambda}{\lambda}\right)+1}$.

\subsection*{Acknowledgments}

J.V. da Silva has been partially supported by CNPq-Brazil under Grant No. 307131/2022-0 and FAEPEX-UNICAMP 2441/23 Editais Especiais - PIND - Projetos Individuais (03/2023). M. Santos was partially supported by the Portuguese government through FCT-Funda\c c\~{a}o para a Ci\^{e}ncia e a Tecnologia, I.P., under the projects UID/MAT/04459/2020 and PTDC/MAT-PUR/1788/2020. We would like to thank the Instituto de Matem\'{a}tica Pura e Aplicada, IMPA (Rio de Janeiro - Brazil) for fostering a pleasant and productive scientific atmosphere during the $34$th Brazilian Mathematics Colloquium, where part of this research work was developed.

\bigskip

\noindent\textsc{Makson S. Santos}\\
Departamento de Matem\'atica do Instituto Superior T\'ecnico\\
Universidade de Lisboa\\
1049-001 Lisboa, Portugal\\
\noindent\url{makson.santos@tecnico.ulisboa.pt}

\vspace{.15in}

\noindent\textsc{Jo\~{a}o Vitor da Silva}\\
Departamento de Matem\'{a}tica\\
Universidade Estadual de Campinas - UNICAMP, \\
13083-970, Barão Geraldo, Campinas - SP, Brazil\\
\noindent\url{jdasilva@unicamp.br}

\end{document}